\let\oldemptyset\emptyset
\let\emptyset\oldemptyset
\newtheorem{theorem}{Theorem}[section]
\newtheorem{proposition}[theorem]{Proposition}
\newtheorem{corollary}[theorem]{Corollary}
\newtheorem{lemma}[theorem]{Lemma}
\theoremstyle{definition}
\newtheorem{example}[theorem]{Example}
\newtheorem{definition}[theorem]{Definition}
\newtheorem{remark}[theorem]{Remark}
\newcommand{\II}{{\mathbb I}}
\newcommand{\RR}{\mathbb{R}}
\newcommand{\kd}{K_\delta}
\newcommand{\ad}{A_\delta}
\newcommand{\bd}{B_\delta}
\newcommand{\ud}{U_\delta}
\newcommand{\cc}{\overline{C}_\delta}
\newcommand{\dhat}{\widehat{\delta}}
\newcommand{\tv}{\mathrm{TV}}
\newcommand{\NN}{\mathbb{N}}
\newcommand{\vol}{\mathrm{V}}
\title{Exact upper bound for copulas with a given diagonal section}
\author[D. Kokol Bukovšek]{Damjana Kokol Bukovšek \orcidlink{0000-0002-0098-6784} }
\address{University of Ljubljana, School of Economics and Business, and Institute of Mathematics, Physics and Mechanics, Ljubljana, Slovenia}
\email{Damjana.Kokol.Bukovsek@ef.uni-lj.si}
\author[B. Mojškerc]{Blaž Mojškerc \orcidlink{0000-0001-8096-355X} }
\address{University of Ljubljana, School of Economics and Business, and Institute of Mathematics, Physics and Mechanics, Ljubljana, Slovenia}
\email{Blaz.Mojskerc@ef.uni-lj.si}
\author[N. Stopar]{Nik Stopar \orcidlink{0000-0002-0004-4957} } 
\address{University of Ljubljana, Faculty of Civil and Geodetic Engineering, and Institute of Mathematics, Physics and Mechanics, Ljubljana, Slovenia}
\email{Nik.Stopar@fgg.uni-lj.si}
\keywords{Copula; diagonal section; exact upper bound; asymmetry}
\subjclass[2020]{62H05, 60E05}
\begin{document}

\maketitle

\begin{abstract}
   We answer a 15-year-old open question about the exact upper bound for bivariate copulas with a given diagonal section by giving an explicit formula for this bound. As an application, we determine the maximal asymmetry of bivariate copulas with a given diagonal section and construct a copula that attains it. We derive a formula for the maximal asymmetry that is simple enough to be used by practitioners.
\end{abstract}

\section{Introduction}
In recent years there has been a lot of interest in determining exact upper and lower bounds for sets of copulas satisfying some additional conditions.
The main motivation for these studies are practical applications in situations where the exact joint distribution of studied random variables is unknown, and only partial information is available. Through Sklar's theorem this partial information can usually be translated into conditions for the corresponding copulas. In this case, having tight bounds for the set of all admissible copulas allows one to make better estimations of the quantities of interest.

A particular example of the above is the situation when we prescribe the values of copulas on a given subset $S$ of the unit box.
Standard cases for the set $S$ that have been considered in the literature include a finite set of points, a vertical or horizontal section, the main or opposite diagonal, a curvilinear section, a track, or a general compact set.

Exact upper and lower bounds for bivariate copulas with prescribed value at a single point are given in \cite[Theorem~3.2.3]{Ne99}. In the multivariate setting these bounds were studied in  \cite{RoUb05} and finally determined in \cite{KlEA22}. For copulas with prescribed values at more than one point see \cite{MaSaSh10,DeDe13,KlEA22}.

Bivariate copulas with prescribed diagonal section $\delta$ were first studied by Bertino \cite{Be77}, and later by Fredricks and Nelsen \cite{FrNe97}.
The exact lower bound for such copulas is given in  \cite{FrNe02} and it is the Bertino copula.
In \cite[Theorems~20 and 32]{NeQuRoUb} an upper bound is provided which is exact in some special cases. An example is also given which shows that this bound is not exact in general.
The exact upper bound for all bivariate copulas with a given diagonal section is still an open question which we settle in this paper.

Diagonal sections of multivariate copulas were investigated in \cite{CuTh01,Ja09}. 
Bivariate copulas with prescribed opposite diagonal sections were considered in \cite{deAmEA16}.
In \cite{ZouEA} the authors study bounds for copulas with prescribed curvilinear sections.
For more information on these results we refer the reader to a review paper \cite{FeUb}.
Copulas with prescribed vertical or horizontal section are investigated in \cite{DuKoMeSe07}, while tracks of copulas are considered in \cite{GeQuRoSe99}. Some results on copulas with prescribed values on general compact sets can be found in \cite{LuPa17}.

The main goal of this paper is to answer an open question of the exact upper bound for bivariate copulas with a given diagonal section (see \cite{NeQuRoUb}) by giving an explicit formula for this bound.
We achieve this by constructing a new copula with prescribed diagonal section, which attains the bound on the entire upper-left triangle of the unit square. We also answer the question posed in \cite{NeQuRoUb}, for which diagonal sections this exact bound is a copula.

As an application of our main result, we determine the maximal asymmetry of bivariate copulas with a given diagonal section and construct a copula that attains it.
In order to achieve this, we rely heavily on the formulas for the exact upper and lower bounds for copulas with a given diagonal section.
We simplify originally complicated optimization problem of finding the maximal asymmetry on non-tangible, partly
unknown domain to simpler optimization problems on compact intervals or the upper-left triangle of the unit square. In particular, we give explicit formulas to calculate the maximal asymmetry value, which should be simple enough to be used in practice. We also give some examples to illustrate our formulas.

The asymmetry, also known as the degree of non-exchangeability, is important in practice, as it is often the case that the data sets exhibit asymmetric properties \cite{GeNe13}.
To choose an appropriate family of copulas to model the data, it is advantageous to examine the asymmetry of copulas.
The asymmetry of copulas has been extensively studied, see for example \cite{KlMe,Ne07,DuPa10,KoKoMoOm20}.

This paper is structured as follows. In Section~\ref{sec:pre} we give the necessary definitions and recall some known results on copulas with prescribed diagonal section. In Section~\ref{sec:cop} we determine the exact upper bound of all such copulas, and in Section~\ref{sec:asymmetry} we apply our results to calculate their maximal asymmetry.

\section{Preliminaries}\label{sec:pre}

Throughout the paper we shall denote the unit interval by $\II=[0,1]$. We will use the terms \emph{increasing} to mean non-decreasing and \emph{decreasing} to mean non-increasing.

\begin{definition}
Let   $C\colon\II^2\to\II$ be a bivariate function. Then
\begin{itemize}
    \item[(i)] $C$ is \emph{grounded} if $C(x, 0)=C(0,y)=0$ for all $x,y \in \II$;
    \item[(ii)] $C$ has \emph{uniform marginals} if $C(x,1)=x$ and $C(1,y)=y$ for all $x,y \in \II$;
    \item[(iii)] $C$ is \emph{$2$-increasing} if for any rectangle $R=[a,b]\times [c,d] \subseteq \II^2$ it holds that
\begin{equation*}
    \vol_C(R) = C(b,d) + C(a,c) - C(b,c) - C(a,d) \ge 0.
\end{equation*}
\end{itemize}
\end{definition}

\begin{definition}
A bivariate function $C\colon\II^2\to\II$ is called
\begin{itemize}
\item[(i)] a \emph{copula} if it is grounded, has uniform marginals and is $2$-increasing;
\item[(ii)] a \emph{quasi-copula} if it is grounded, has uniform marginals, is increasing in each variable, and $1$-Lipschitz.
\end{itemize}
\end{definition}

We denote by $\mathcal{C}$ the set of all copulas and by $\mathcal{Q}$ the set of all quasi-copulas. Set $\mathcal{C}$ is a subset of $\mathcal{Q}$ and set $\mathcal{Q}$ is closed under taking arbitrary point-wise infima and suprema, i.e., it is a complete lattice, see \cite{NeLaUbFl04,NeUb05}.
If $C$ is a copula or a quasi-copula we denote by $\delta_C$ its \emph{diagonal section}, i.e., a function $\delta_C(x) = C(x,x)$, and by $C^t$ its \emph{transpose}, i.e., copula or quasi-copula $C^t(x,y) = C(y,x)$.

We have the following characterization of diagonal sections (for copulas see \cite{FrNe97}, the proof for quasi-copulas is the same).

\begin{proposition}\label{pr:delta}
For a function $\delta\colon \II \to \II$ the following conditions are equivalent:
\begin{enumerate}[$(i)$]
    \item $\delta$ is a diagonal section of some copula,
    \item $\delta$ is a diagonal section of some quasi-copula,
    \item $\delta$ satisfies the conditions
    \begin{enumerate}[$(a)$]
        \item $\delta(x)\le x$ for all $x\in \II$,
        \item $0\le \delta(y)- \delta(x) \le 2(y-x)$ for all $x,y \in \II$ such that $x \le y$, and
        \item $\delta(1)=1$.
    \end{enumerate}
\end{enumerate}
\end{proposition}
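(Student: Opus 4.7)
The cleanest path is the cycle $(i) \Rightarrow (ii) \Rightarrow (iii) \Rightarrow (i)$, with the first implication being vacuous because $\mathcal{C} \subseteq \mathcal{Q}$.

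For $(ii) \Rightarrow (iii)$, I would assume $\delta(x) = Q(x,x)$ for some quasi-copula $Q$ and read off each property from the defining axioms. Condition $(a)$ follows because $Q$ is increasing in each argument and has uniform marginals, so $Q(x,x) \le Q(x,1) = x$. Condition $(c)$ is just $Q(1,1)=1$. For $(b)$, fix $x \le y$. Monotonicity of $Q$ in each argument gives $Q(y,y) \ge Q(x,x)$, so $\delta(y) - \delta(x) \ge 0$. The $1$-Lipschitz property applied twice yields
\begin{equation*}
\delta(y) - \delta(x) = \bigl(Q(y,y) - Q(x,y)\bigr) + \bigl(Q(x,y) - Q(x,x)\bigr) \le (y-x) + (y-x) = 2(y-x),
\end{equation*}
which is the remaining half of $(b)$.

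The nontrivial direction is $(iii) \Rightarrow (i)$, where one must manufacture an honest copula with diagonal $\delta$. My plan is to exhibit the Bertino copula
\begin{equation*}
B_\delta(x,y) = \min(x,y) - \min_{t \in [\min(x,y),\, \max(x,y)]} \bigl(t - \delta(t)\bigr),
\end{equation*}
which is well-defined because the continuity of $\delta$ (a consequence of the $2$-Lipschitz bound in $(b)$) makes the inner minimum attained on a compact interval. I would then verify the three copula axioms. Groundedness follows since the inner minimum at $\min(x,y)=0$ equals $0-\delta(0)=0$, using $\delta(0)=0$ (which itself follows from $(a)$ and $(b)$ with $x=0,\,y=1$). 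Uniform marginals reduce to showing $\min_{t\in[x,1]}(t-\delta(t)) = 0$, which holds because $t-\delta(t)\ge 0$ by $(a)$ and equals $0$ at $t=1$ by $(c)$. The diagonal section reproduces $\delta$ since on the diagonal the infimum interval is a single point. The one genuinely technical point is the $2$-increasing property: one checks $\vol_{B_\delta}(R)\ge 0$ on a rectangle $R=[a,b]\times[c,d]$ by a case analysis on how $R$ sits relative to the diagonal and by exploiting that $t \mapsto t-\delta(t)$ is $1$-Lipschitz (again by $(b)$), so any change in the running minimum from broadening one endpoint of the interval is bounded by the corresponding change in $\min(x,y)$.

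The expected main obstacle is precisely that $2$-increasing verification, because it requires tracking how the pointwise minimum of $t - \delta(t)$ behaves as the interval endpoints slide, across the several geometric configurations of $R$ with respect to the diagonal. Once this is settled, $B_\delta$ is a copula with $\delta_{B_\delta} = \delta$, closing the cycle.
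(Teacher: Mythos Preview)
The paper does not actually prove this proposition; it merely states it and cites \cite{FrNe97} for the copula case, remarking that the quasi-copula case is identical. So there is no ``paper's proof'' to compare against directly.

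Your cycle $(i)\Rightarrow(ii)\Rightarrow(iii)\Rightarrow(i)$ is the standard route and your arguments for the first two implications are correct. One small quibble: to get $\delta(0)=0$ you appeal to $(a)$ and $(b)$, but $(b)$ with $x=0,y=1$ only gives $\delta(0)\le\delta(1)=1$; the lower bound $\delta(0)\ge 0$ comes from the hypothesis that $\delta$ maps into $\II$, not from $(b)$. This does not affect the conclusion.

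For $(iii)\Rightarrow(i)$ your Bertino construction works, but it is worth noting that the reference the paper cites, \cite{FrNe97}, uses instead the symmetric copula
\[
K_\delta(x,y)=\min\Bigl\{x,y,\tfrac12\bigl(\delta(x)+\delta(y)\bigr)\Bigr\},
\]
which also appears later in the paper (Theorem~\ref{th:ABK}). The verification that $K_\delta$ is $2$-increasing is arguably cleaner than the Bertino case: one can invoke a general lemma (e.g.\ \cite[Lemma~3.1]{DuJa}, used elsewhere in the paper) that $\min\{x,y,f(x,y)\}$ is $2$-increasing whenever $f$ is $2$-increasing, increasing in each variable, and $1$-Lipschitz, and here $f(x,y)=\tfrac12(\delta(x)+\delta(y))$ trivially has all $f$-volumes zero. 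By contrast, the Bertino $2$-increasing check genuinely requires the case analysis you describe, tracking how the running minimum of $t\mapsto t-\delta(t)$ changes as endpoints move. Your plan is correct but you have chosen the more laborious of the two standard witnesses.
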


For any diagonal section $\delta$ we denote by $\dhat\colon \II \to \II$ the function defined by $\dhat(x)=x- \delta(x)$. 
The function $\dhat$ is 1-Lipschitz, since, by Proposition~\ref{pr:delta} $(iii) (b)$, we have for any $x,y \in \II$ with $y \ge x$
$$ \dhat(y) - \dhat(x) = (y-x) - (\delta(y) - \delta(x)) \le y-x $$
and 
$$ \dhat(y) - \dhat(x) = (y-x) - (\delta(y) - \delta(x)) \ge y-x - 2(y-x) = x-y.$$
This fact will be often used in our proofs. A similar argument as above shows that the difference of two increasing 1-Lipschitz functions is also 1-Lipschitz.

\begin{definition}
For any copula or quasi-copula $C$ its \emph{asymmetry} is defined as the supremum norm of the difference $C - C^t$, i.e.,
$$ \mu(C) = \max\big\{|C(x,y)-C(y,x)| \colon x,y\in \II\big\}.$$
\end{definition}

The asymmetry of any copula $C$ lies in the interval $[0,\frac13]$, see \cite{KlMe,Ne07}. If $\mu(C) =0$, then $C=C^t$ and $C$ is called \emph{symmetric}. 

\begin{definition} For a diagonal section $\delta$ define functions $\ad, \bd, \cc, \underline{C}_\delta, \kd \colon \II^2 \to \II$ by
    \begin{enumerate}[(i)]
        \item $\displaystyle \ad(x,y)= \sup \{ Q(x,y) \colon Q\in \mathcal{Q}, \delta_Q=\delta \}$,
        \item $\displaystyle \bd(x,y) = \inf \{ Q(x,y) \colon Q \in \mathcal{Q}, \delta_Q= \delta \}$,
        \item $\displaystyle \cc(x,y)= \sup \{ C(x,y) \colon C \in \mathcal{C}, \delta_C = \delta \},$
         \item $\displaystyle \underline{C}_\delta(x,y) = \inf \{ C(x,y) \colon C \in \mathcal{C}, \delta_C= \delta \}$,
        \item $\displaystyle \kd(x,y)=\sup \{ C(x,y) \colon C\in \mathcal{C}, C^t=C, \delta_C = \delta \}$.
    \end{enumerate}
\end{definition}

There exist explicit formulas for four of these functions, which we now recall. The formula for the remaining one will be given in our Theorem~\ref{th:cc}.
For any $x,y \in \II$ we will sometimes denote $x \land y=\min\{x,y\}$ and  $x \lor y=\max\{x,y\}$.

\begin{theorem}[\cite{NeLaUbFl04,FrNe97}] \label{th:ABK}
For a diagonal section $\delta$ we have
    \begin{enumerate}[$(i)$]
        \item $\displaystyle \ad(x,y)= \min \big\{ x, y, \max \{ x,y \} - \max_{t \in [x \land y, x \lor y]} \dhat(t)  \big\}$,
        \item $\displaystyle \bd(x,y) = \underline{C}_\delta(x,y) = \min \{x,y \} -\min_{t \in [x \land y, x \lor y]} \dhat(t)$,
        \item $\displaystyle \kd(x,y)= \min \big\{x,y, \frac 12 \big(\delta(x)+\delta(y)\big) \big\}$.
    \end{enumerate}
\end{theorem}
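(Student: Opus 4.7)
The plan is to handle the three formulas in parallel, for each one proving (a) the right-hand side is a pointwise bound for the relevant class, and then (b) exhibiting a member of the class attaining the bound. Cases (i) and (ii) share the same tool kit (monotonicity, 1-Lipschitz continuity, and the Fréchet--Hoeffding upper bound), while (iii) needs the full 2-increasing inequality together with symmetry.

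For (i) and (ii), fix a point $(x,y)\in\II^2$ and, without loss of generality, assume $x\le y$. Let $Q\in\mathcal{Q}$ with $\delta_Q=\delta$ and $t\in[x,y]$. Since $Q$ is increasing in the first variable, $Q(x,y)\le Q(t,y)$, and since $Q$ is 1-Lipschitz in the second variable, $Q(t,y)-Q(t,t)\le y-t$. Therefore
\begin{equation*}
Q(x,y)\le \delta(t)+(y-t)=y-\dhat(t).
\end{equation*}
Combined with $Q(x,y)\le\min\{x,y\}$ and taking the infimum over $t\in[x,y]$ gives the upper bound in (i). Symmetrically, $Q(x,y)\ge Q(x,t)\ge Q(t,t)-(t-x)=x-\dhat(t)$, and the supremum over $t$ gives the lower bound in (ii). To prove tightness I would verify directly that $\ad$ satisfies all quasi-copula axioms with $\delta_{\ad}=\delta$, and that $\bd$ satisfies all copula axioms with $\delta_{\bd}=\delta$ — the latter is the classical Bertino copula, so in particular the infima over $\mathcal{Q}$ and $\mathcal{C}$ coincide.

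For (iii), any $C\in\mathcal{C}$ with $\delta_C=\delta$ already satisfies $C(x,y)\le\min\{x,y\}$. Assuming $x\le y$, applying 2-increasingness to the square $[x,y]^2$ yields
\begin{equation*}
\delta(x)+\delta(y)-C(x,y)-C(y,x)=\vol_C([x,y]^2)\ge 0,
\end{equation*}
and if $C$ is symmetric this collapses to $C(x,y)\le \tfrac12(\delta(x)+\delta(y))$. Thus every symmetric copula with diagonal $\delta$ is pointwise $\le \kd$. For sharpness I would, given a target point $(x_0,y_0)$ with $x_0<y_0$, construct an explicit symmetric copula with diagonal $\delta$ attaining $\kd(x_0,y_0)$, typically by a patching/ordinal-sum recipe in which one block is forced to put the maximum admissible mass onto the anti-diagonal connecting $(x_0,y_0)$ and $(y_0,x_0)$, so that symmetry, uniform marginals, the prescribed diagonal, and 2-increasingness can all be checked block by block.

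The main obstacle in all three cases is the attainment step rather than the inequalities. For (i), establishing that $\ad$ is 1-Lipschitz requires a careful analysis of how $\max_{t\in[x\wedge y,x\vee y]}\dhat(t)$ varies with $(x,y)$, exploiting that $\dhat$ is itself 1-Lipschitz. For (ii), the 2-increasing property of $\bd$ needs a case distinction according to where the minimizer of $\dhat$ on the relevant interval lies. For (iii), the subtlety is that $\kd$ is in general not itself a copula, so the supremum is genuinely a pointwise supremum of distinct symmetric copulas, and an explicit construction for each point $(x_0,y_0)$ is unavoidable.
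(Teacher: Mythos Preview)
The paper does not prove this theorem; it is quoted as a known result from \cite{NeLaUbFl04,FrNe97}, so there is no ``paper's own proof'' to compare against. Your derivations of the three inequalities are correct and are exactly the standard arguments.

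However, there is a genuine error in your plan for attainment in~(iii). You write that ``$\kd$ is in general not itself a copula, so the supremum is genuinely a pointwise supremum of distinct symmetric copulas, and an explicit construction for each point $(x_0,y_0)$ is unavoidable.'' This is false: the function $\kd(x,y)=\min\{x,y,\tfrac12(\delta(x)+\delta(y))\}$ is \emph{always} a copula --- this is precisely the Fredricks--Nelsen diagonal copula, and the paper states this explicitly just after Theorem~\ref{th:ABK}. Once one verifies that $\kd$ is $2$-increasing (which follows from \cite[Lemma~3.1]{DuJa} or a direct case analysis, since $\tfrac12(\delta(x)+\delta(y))$ is a sum of univariate functions and hence has all volumes zero), $\kd$ is a symmetric copula with diagonal $\delta$ that attains the bound at every point simultaneously. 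No pointwise patching construction is needed, and indeed such a construction would be considerably harder than the actual proof.

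Your plans for (i) and (ii) are sound: one checks that $\ad$ is a quasi-copula and that $\bd$ (the Bertino copula) is a copula, both with diagonal $\delta$, which settles attainment globally rather than pointwise.
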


Functions $\ad, \bd, \cc, \underline{C}_\delta, \kd$ are all quasi-copulas, since they are infima or suprema
of quasi-copulas. It turns out that functions $\bd$ and $\kd$ are always copulas, see \cite{NeLaUbFl04,FrNe97}.

The copula $\bd$ is called \emph{Bertino} copula and it is always singular. The authors in \cite{FrNe02} describe its support. They define a function $h^\delta\colon \II \to \II$ by
\begin{equation}\label{eq:h_delta}
h^\delta(x) = \max\{y \in \II \colon y \ge x, \dhat(t) \ge \dhat(x) \text{ for all } t \in [x,y]\}.
\end{equation}
Note that the maximum is attained since $\dhat$ is continuous.
The function $h^\delta$ is strictly decreasing on every interval where $\dhat$ is strictly increasing, $h^\delta$ is constant on every interval where $\dhat$ is constant, $h^\delta$ is an identity on every open interval where $\dhat$ is strictly decreasing, see \cite{FrNe02}.
In addition, since $\dhat$ is continuous, $\dhat(h^\delta(x))=\dhat(x)$ and $h^\delta(h^\delta(x))=h^\delta(x)$ for all $x \in \II$. 
Furthermore, $h^\delta$ is upper-semicontinuous, i.e., for every $x \in \II$ we have $\limsup_{u\to x}h^\delta(u) \le h^\delta(x)$. Indeed, we have
$$\limsup_{u \to x} h^\delta(u) \ge \limsup_{u \to x} u=x.$$
If $\limsup_{u \to x} h^\delta(u) = x$, then $\limsup_{u \to x} h^\delta(u) \le h^\delta(x)$ by definition and the claim holds. Otherwise, let $t \in (x, \limsup_{u \to x} h^\delta(u))$. There exists a sequence $u_i \in \II$ such that $\lim_{i \to \infty} u_i=x$ and $\lim_{i \to \infty} h^\delta(u_i) = \limsup_{u \to x} h^\delta(u)$.
Then $u_i<t<h^\delta(u_i)$ for $i$ sufficiently large, so that $\dhat(t) \ge \dhat(u_i)$ by \eqref{eq:h_delta}.
Taking the limit as $i$ goes to $\infty$ gives $\dhat(t) \ge \dhat(x)$.
Since also
$$\dhat(\limsup_{u \to x} h^\delta(u))=\limsup_{u \to x} \dhat(h^\delta(u))=\limsup_{u \to x} \dhat(u)=\dhat(x),$$
we have $\dhat(t) \ge \dhat(x)$ for all $t \in [x,\limsup_{u \to x} h^\delta(u)]$, which implies $\limsup_{u\to x}h^\delta(u) \le h^\delta(x)$ and the claim is proved.

\begin{theorem}[\cite{FrNe02}] \label{th:Bertino}
For a diagonal section $\delta$ the support of the Bertino copula $\bd$ is the smallest closed set which is symmetric with respect to the main diagonal and contains the continuous, strictly decreasing parts of the graph of function $h^\delta$, and the closure of the set $\{(x,x) \in \II^2\colon \dhat'(x) \ne \pm1\}$.
\end{theorem}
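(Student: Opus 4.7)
The plan is to analyze the Bertino copula $\bd$ directly from the formula in Theorem~\ref{th:ABK}$(ii)$ and determine where its induced probability measure is concentrated. Since $\bd(x,y) = \min\{x,y\} - \min_{t \in [x\land y, x \lor y]} \dhat(t)$ is symmetric in $x$ and $y$, the measure is symmetric under reflection across the main diagonal, so I would restrict attention to the upper-left triangle $T = \{(x,y) \in \II^2 : x \le y\}$, on which $\bd(x,y) = x - \min_{t \in [x,y]} \dhat(t)$.

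The key step is to partition $T$ according to where the minimum of $\dhat$ on $[x,y]$ is attained. By the definition \eqref{eq:h_delta} of $h^\delta$, the region $T_1 = \{(x,y) \in T : y \le h^\delta(x)\}$ is precisely the set where this minimum is attained at $t=x$, giving $\bd(x,y) = x - \dhat(x) = \delta(x)$ there, a function of $x$ alone. In the complement, the minimizer lies strictly inside $[x,y]$ or at $y$; a small perturbation of $y$ either leaves the minimizer unchanged or drags it along with $y$, so $\bd$ is locally a function of a single coordinate. In either case the absolutely continuous part of the $\bd$-density vanishes, so the entire mass of $\bd$ is singular and must be carried by the common boundary of the two regions, namely the graph of $h^\delta$, together with the diagonal.

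It remains to identify which portions of this candidate set carry positive singular mass. On horizontal stretches of $h^\delta$ (corresponding to intervals where $\dhat$ is constant), $\bd$ is actually constant on a two-dimensional neighborhood of the graph, so these pieces contribute nothing. On the continuous, strictly decreasing parts of the graph of $h^\delta$, crossing the arc produces a jump in the $x$-derivative of $\bd$, yielding a positive line mass that can be detected by $\vol_{\bd}(R)$ for any small rectangle straddling the arc. For the diagonal contribution, a direct computation $\vol_{\bd}([x,x+\epsilon]^2) = \delta(x+\epsilon) + \delta(x) - 2\bd(x+\epsilon,x)$, combined with the first-order behavior of $\min_{t \in [x,x+\epsilon]} \dhat(t)$, produces a leading term of the form $(1-|\dhat'(x)|)\,\epsilon$ wherever $\dhat$ is differentiable, which is strictly positive exactly when $\dhat'(x) \ne \pm 1$.

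The main obstacle is the rigorous treatment at points where $\dhat$ fails to be differentiable or where $h^\delta$ has jump discontinuities (since $h^\delta$ is only upper-semicontinuous). I would bypass derivatives entirely by verifying directly that $\vol_{\bd}(R) = 0$ for every closed rectangle $R \subset T$ whose interior misses both the diagonal and the continuous strictly decreasing parts of the graph of $h^\delta$, using case analysis on where $\dhat$ attains its minimum over the projection of $R$, and conversely exhibiting rectangles of positive $\bd$-volume meeting every such arc and every point of the closure of $\{(x,x) \in \II^2 : \dhat'(x) \ne \pm 1\}$. Reflecting across the diagonal and passing to closure then identifies the support as the smallest closed symmetric set with the required inclusions.
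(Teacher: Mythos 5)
The paper does not prove this statement: it is quoted from \cite{FrNe02} as a known result, so there is no internal proof to compare against. Judged on its own, your strategy is the standard one for this theorem and is sound in outline: reduce by symmetry to $x\le y$, where $\bd(x,y)=x-\min_{t\in[x,y]}\dhat(t)$, and decide which rectangles carry positive $\bd$-volume. Two points need repair. First, on a horizontal stretch of $h^\delta$ the copula $\bd$ is \emph{not} constant on a two-dimensional neighborhood (it still varies with $x$); what is true, and what you actually need, is that $\bd$ is additively separable there, of the form $x-\varphi(y)$, which already forces all rectangle volumes to vanish. The same correction applies to your phrase ``locally a function of a single coordinate'' in the region where the minimizer sits at $y$. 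Second, the derivative-jump heuristic for the arcs of $h^\delta$ should be replaced by the exact identity that your final paragraph gestures at: for $a\le b\le c\le d$ one has
\begin{equation*}
\vol_{\bd}([a,b]\times[c,d])=\beta+\min\{\alpha,\beta,\gamma\}-\min\{\alpha,\beta\}-\min\{\beta,\gamma\},
\qquad \alpha=\min_{[a,b]}\dhat,\ \beta=\min_{[b,c]}\dhat,\ \gamma=\min_{[c,d]}\dhat,
\end{equation*}
which is $\beta-\max\{\alpha,\gamma\}>0$ precisely when $\beta>\alpha$ and $\beta>\gamma$, and $0$ otherwise. This makes the off-diagonal case analysis immediate (positive mass exactly when the rectangle straddles the graph of $h^\delta$ over a strictly increasing part of $\dhat$), while your first-order computation $\vol_{\bd}([x,x+\epsilon]^2)=\epsilon-\dhat(x+\epsilon)-\dhat(x)+2\min_{[x,x+\epsilon]}\dhat\sim(1-|\dhat'(x)|)\epsilon$ correctly handles the diagonal. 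With these adjustments the argument goes through.
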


We now recall the definition of total variation. 

\begin{definition} For a function $f \colon \II \to \mathbb{R}$ and $0 \le x \le y \le 1$ the \emph{total variation} of $f$ on $[x,y]$ is defined by
    $$\displaystyle \tv_x^y(f) = \sup \left\{ \sum_{i=1}^{n} |f(x_{i}) - f(x_{i-1})| \colon x=x_0 < x_1 < \ldots < x_n = y,\ n \in \NN \cup \{0\} \right\}.$$
\end{definition}

For convenience we extend the definition of $\tv_x^y(f)$ to the case when $y<x$ by letting
$\tv_x^y(f)=-\tv_y^x(f)$. If $\tv_0^1(f)$ is finite then 
$\tv_x^y(f)=\tv_0^y(f)-\tv_0^x(f)$ for any $x,y \in \II$.
For a $1$-Lipschitz function $f$ we have $|\tv_x^y(f)| \le |y-x|$, in particular, the function $x \mapsto \tv_0^x(f)$ is $1$-Lipschitz and increasing as a function $\II \to \mathbb{R}$.
For any absolutely continuous function $f$, in particular, for any $1$-Lipschitz function, the following formula holds \cite[Theorem 7.31]{WhZy}:
\begin{equation}\label{eq:tvint}
    \tv_x^y(f)= \int_x^y |f'(t)|dt.
\end{equation}

\section{Copulas with prescribed diagonal section}\label{sec:cop}

We first prove a new upper bound for all copulas with prescribed diagonal section. Later we will prove that it is exact.

\begin{proposition}\label{pr:bound}
Let $\delta$ be a diagonal section and let $C\in \mathcal{C}$ be any copula with $\delta_C=\delta$. Then for every $x,y \in \II$ it holds that
\begin{equation}\label{eq:zgornja}
    C(x,y) \le  \min \left\{ x,y,\max \{ x,y \} - \frac12 \left(\dhat(x)+ \dhat(y) + \tv_{x \land y}^{x\lor y} (\dhat) \right) \right\}.
\end{equation}
\end{proposition}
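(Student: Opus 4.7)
The bounds $C(x,y) \le x$ and $C(x,y) \le y$ follow immediately from $C(x,1)=x$, $C(1,y)=y$ and monotonicity, so the content is the third bound in the minimum. The right-hand side of \eqref{eq:zgornja} is symmetric in $x$ and $y$, and $C^t$ is also a copula with diagonal section $\delta$, so after replacing $C$ by $C^t$ if necessary I may assume $x \le y$. The inequality to prove then reads
\[
2\bigl(y-C(x,y)\bigr) \;\ge\; \dhat(x) + \dhat(y) + \tv_x^y(\dhat).
\]

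\textbf{Reduction to a termwise inequality.} For an arbitrary partition $x=t_0<t_1<\cdots<t_n=y$, the identities $\delta(y)-C(x,y)=\sum_i[C(t_i,y)-C(t_{i-1},y)]$ and $\dhat(y)-\dhat(x)=\sum_i[\dhat(t_i)-\dhat(t_{i-1})]$, combined with $y = \delta(y) + \dhat(y)$, allow me to rewrite the left-hand side as
\[
2\bigl(y-C(x,y)\bigr) - \dhat(x) - \dhat(y) \;=\; \sum_{i=1}^n \Bigl\{\,2\bigl[C(t_i,y)-C(t_{i-1},y)\bigr] + \dhat(t_i)-\dhat(t_{i-1})\Bigr\}.
\]
Since $\tv_x^y(\dhat)$ is by definition the supremum over such partitions of $\sum_i|\dhat(t_i)-\dhat(t_{i-1})|$, it suffices to establish the termwise bound $2[C(t_i,y)-C(t_{i-1},y)] + \dhat(t_i)-\dhat(t_{i-1}) \ge |\dhat(t_i)-\dhat(t_{i-1})|$ for every $i$ and every partition. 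When $\dhat(t_i)\ge\dhat(t_{i-1})$ this collapses to $C(t_i,y)\ge C(t_{i-1},y)$, i.e.\ monotonicity of a copula in its first argument.

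\textbf{The main step.} The remaining case is the key inequality
\[
C(t,y) - C(s,y) \;\ge\; \dhat(s) - \dhat(t) \qquad \text{for } s \le t \le y,
\]
which is where the structure of $C$ genuinely enters and where I expect the main work to sit. My plan is to obtain it by applying two standard copula properties to the rectangle $[s,t]\times[t,y]$: the 2-increasing condition yields $C(t,y) - C(s,y) \ge C(t,t) - C(s,t) = \delta(t) - C(s,t)$, while the 1-Lipschitz property in the second coordinate gives $C(s,t) \le C(s,s) + (t-s) = \delta(s) + (t-s)$. Subtracting and using $\dhat = \mathrm{id}-\delta$ produces precisely $\delta(t)-\delta(s)-(t-s) = \dhat(s)-\dhat(t)$. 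With the key inequality in hand, the termwise estimate is valid in both sign cases, and passing to the supremum over partitions completes the proof. The only obstacle I anticipate is careful sign bookkeeping in the two cases and verifying that the supremum correctly produces $\tv_x^y(\dhat)$; both are routine once the termwise inequality is properly formulated.
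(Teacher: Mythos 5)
Your proof is correct, and it takes a genuinely different and more economical route than the paper's. The paper fixes a near-optimal partition (up to $\varepsilon$), extracts from it two interlaced subsequences $s_0\le t_1\le s_1\le\cdots$ tracking the monotone runs of $\dhat$, and then combines two separate families of rectangle inequalities (one bounding $C(x,y)$ from above via $\delta(t_k)$ and $C(t_{k+1},t_k)$, one bounding $C(t_{k+1},t_k)$ from below) before a final rearrangement. You instead work with an arbitrary partition directly: the telescoping identity
$2\bigl(y-C(x,y)\bigr)-\dhat(x)-\dhat(y)=\sum_i\bigl\{2[C(t_i,y)-C(t_{i-1},y)]+\dhat(t_i)-\dhat(t_{i-1})\bigr\}$
(which I checked reduces to $y=\delta(y)+\dhat(y)$) turns everything into one termwise estimate, and your key inequality $C(t,y)-C(s,y)\ge\dhat(s)-\dhat(t)$ for $s\le t\le y$ follows cleanly from $\vol_C([s,t]\times[t,y])\ge 0$ together with the Lipschitz bound $C(s,t)\le\delta(s)+(t-s)$; the case $\dhat(t_i)\ge\dhat(t_{i-1})$ is just monotonicity. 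Summing and taking the supremum over partitions gives $\tv_x^y(\dhat)$ on the nose, with no $\varepsilon$-approximation and no bookkeeping of local extrema. What the paper's heavier argument buys is the explicit geometric picture (the two families of boxes in its Figure~2) that motivates the later construction of $U_\delta$ and the description of where its mass sits, but as a proof of the inequality itself your version is shorter and, in my view, cleaner. Your reductions (symmetry of the right-hand side plus passing to $C^t$, and $C(x,y)\le\min\{x,y\}$) match the paper's.
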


\begin{remark}
    The intuition behind the bound in \eqref{eq:zgornja} is as follows. In \cite[Theorem~32]{NeQuRoUb} it is proved that quasi-copula $\ad$ is the exact upper bound for copulas with prescribed diagonal section $\delta$ in the case $\delta$ is a simple diagonal section. Recall that a diagonal section $\delta$ is called \emph{simple} if there exists $t_0 \in \II$ such that $\dhat$ is increasing on $[0,t_0]$ and decreasing on $[t_0,1]$, i.e., $\dhat$ is unimodal.
    Next step is to consider bimodal $\dhat$. It turns out that in this case the combination of values of $\dhat$ at its three local extrema plays an important role,
    where the signs in the combination alternate. This hints at the total variation of $\dhat$.
\end{remark}

\begin{proof}[Proof of Proposition~\ref{pr:bound}]
We choose any  $x,y \in \II$ with $x \le y$ and let $\varepsilon >0$.
By definition of $\tv_x^y$, there exists a sequence of points $x=x_0 < x_1 < \ldots < x_n = y$ such that
\begin{equation}\label{eq:TV_sup}
\sum_{i=1}^{n} | \dhat(x_{i}) - \dhat(x_{i-1})| \ge \tv_x^y(\dhat) - \varepsilon.
\end{equation}
Notice that as soon as $\dhat(x_{i-1}) \le \dhat(x_{i}) \le \dhat(x_{i+1})$ the point $x_i$ can be omitted from the partition while the sum in \eqref{eq:TV_sup} remains the same. The same is true if $\dhat(x_{i-1}) \ge \dhat(x_{i}) \ge \dhat(x_{i+1})$. So we want to find two subsequences $x=s_0 \le t_1 \le s_1 \le t_2 \le s_2 \le \ldots \le s_{m-1} \le t_m \le s_m =y$ of the sequence  $x=x_0 < x_1 < \ldots < x_n = y$ so that
\begin{equation}\label{eq:subsequence}
    \dhat(s_k) \le \dhat(t_k) \text{ and } \dhat(s_{k-1}) \le \dhat(t_k) \text{ for all } 1 \le k \le m,
\end{equation}
as demonstrated by example in Figure~\ref{fig:subseq}.
\begin{figure}
    \centering
    \includegraphics[width=\linewidth]{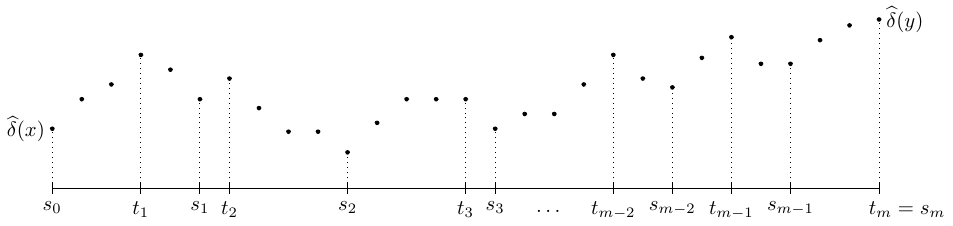}
    \caption{Two subsequences of the sequence $x=x_0 < x_1 < \ldots < x_n = y$ approximating the total variation of $\widehat{\delta}$ on $[x,y]$ in the proof of Proposition~\ref{pr:bound}.}
    \label{fig:subseq}
\end{figure}

Define
\begin{align*}
    && j_0 &=0, &s_0 & =x_0=x, \\
     A_1 &= \{ i \in \NN \colon 1 \le i \le n, \dhat(x_{i})-\dhat(x_{i-1}) < 0 \},   & i_1 &= \min A_1 -1, & t_1 &= x_{i_1}, \\
    &&& \text{ if } A_1 \neq \emptyset &&\\
    B_1 &= \{ i \in \NN \colon i_1+1 \le i \le n, \dhat(x_{i})-\dhat(x_{i-1}) >0 \},  & j_1 &= \min B_1-1,  & s_1 &= x_{j_1},\\
    &&& \text{ if } B_1 \neq \emptyset &&\\
    &\vdots & \vdots& &\vdots&\\
    A_k &= \{ i \in \NN \colon j_{k-1}+1 \le i\le n, \dhat(x_{i})-\dhat(x_{i-1}) < 0 \},  & i_k &= \min A_k-1,  & t_k &= x_{i_k}, \\
    &&& \text{ if } A_k \neq \emptyset &&\\
    B_k &= \{ i \in \NN \colon i_k+1 \le i \le n, \dhat(x_{i})-\dhat(x_{i-1}) > 0 \},  & j_k &= \min B_k-1, \quad & s_k &= x_{j_k}.\\
    &&& \text{ if } B_k \neq \emptyset &&\\
\end{align*}
Let $m$ be the first index such that either $A_{m}=\emptyset$
or $B_m=\emptyset$. If $A_{m} = \emptyset$ then we take $t_m=s_m=x_n=y$ and $i_m=j_m=n$. Else if $B_m=\emptyset$ we have $t_m$ already defined and we take $s_m=x_n=y$ and $j_m = n$.
In particular, if the last non-constant step of $\dhat$ is upwards, then $t_m=s_m$, and if the first non-constant step of $\dhat$ is downwards, then $s_0=t_1$. If $\dhat$ is constant on $\{x_i \colon i=0,1,\ldots,n\}$, then $m=1$, $s_0=x$ and $t_1=s_1=y$.

We have
\begin{align*}
    \dhat(t_k)-\dhat(s_{k-1}) &=\sum_{i=j_{k-1}+1}^{i_k} |\dhat(x_{i})-\dhat(x_{i-1})| \quad\text{and}\\
    \dhat(t_k)-\dhat(s_{k}) &=\sum_{i=i_{k}+1}^{j_k} |\dhat(x_{i})-\dhat(x_{i-1})|
\end{align*}
for all  $1 \le k \le m$, which implies
\begin{equation}\label{eq:4}
\displaystyle \sum_{k=1}^m \big(\dhat(t_k)-\dhat(s_{k-1})\big) +\sum_{k=1}^m \big(\dhat(t_k)-\dhat(s_{k})\big)= \sum_{i=1}^{n} | \dhat(x_{i}) - \dhat(x_{i-1})|.
\end{equation}
We now estimate the value of a copula $C$ at the point $(x,y)$ as follows. From the positivity of volumes of rectangles depicted in Figure~\ref{fig:red-green} (left) we obtain
\begin{align}
0 &\le \vol_C([x,t_1]\times[0,y]) + \sum_{k=1}^{m-1}\vol_C([t_k,t_{k+1}]\times[t_k,y]) + \vol_C([t_m,1]\times[t_m,y]) \label{eq:red}\\
& = C(t_1,y) - C(x,y) + \sum_{k=1}^{m-1}\big(C(t_{k+1},y) + \delta(t_k) - C(t_{k+1},t_k) - C(t_k,y)\big) \nonumber\\
& \qquad + y + \delta(t_m) - t_m - C(t_m,y) \nonumber\\
& = - C(x,y) + \sum_{k=1}^m\delta(t_k) - \sum_{k=1}^{m-1} C(t_{k+1},t_k) + y - t_m ,  \nonumber
\end{align}
so 
\begin{equation}\label{eq:5}
    C(x,y) \le \sum_{k=1}^m\delta(t_k) - \sum_{k=1}^{m-1} C(t_{k+1},t_k) + y - t_m .
\end{equation}

\begin{figure}
    \centering
    \includegraphics[height=0.35\textwidth]{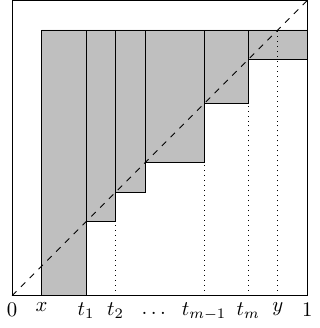}\qquad
    \includegraphics[height=0.35\textwidth]{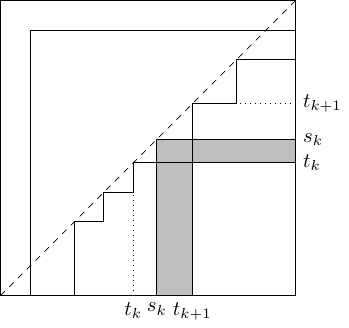}
    \caption{Disjoint unions of boxes used in the derivation of inequalities \eqref{eq:red} and \eqref{eq:green}.}
    \label{fig:red-green}
\end{figure}

Furthermore, for every $k=1,2, \ldots, m-1$ the positivity of volumes of rectangles depicted in Figure~\ref{fig:red-green} (right) implies
\begin{align}
0 &\le \vol_C([s_k,t_{k+1}]\times[0,s_k]) + \vol_C([t_{k+1},1]\times[t_k,s_k]) \label{eq:green}\\
& = C(t_{k+1},s_k) - \delta(s_k) +  s_k + C(t_{k+1},t_k) - t_k - C(t_{k+1},s_k) \nonumber\\
& =  \dhat(s_k) + C(t_{k+1},t_k) - t_k,  \nonumber
\end{align}
so $C(t_{k+1},t_k) \ge t_k - \dhat(s_k)$. It follows from inequality \eqref{eq:5} that
\begin{equation}\label{eq:indirect}
\begin{aligned}
C(x,y) &\le \sum_{k=1}^m\delta(t_k) - \sum_{k=1}^{m-1} C(t_{k+1},t_k) + y - t_m\\
&\le \sum_{k=1}^m\delta(t_k) - \sum_{k=1}^{m-1} \big(t_k - \dhat(s_k)\big) + y - t_m\\
& = \sum_{k=1}^m\delta(t_k) - \sum_{k=1}^m t_k + \sum_{k=1}^{m-1} \dhat(s_k) + y
 = y - \sum_{k=1}^m\dhat(t_k) + \sum_{k=1}^{m-1} \dhat(s_k)
\end{aligned}
\end{equation}
and by rearranging the terms we obtain
\begin{align*} 
    C(x,y) &\le y- \frac12 \left( \dhat(x) + \dhat(y) - \dhat(s_0) - \dhat(s_m) + 2\sum_{k=1}^m\dhat(t_k) - 2\sum_{k=1}^{m-1} \dhat(s_k) \right) \\
    &= y - \frac12 \left( \dhat(x) + \dhat(y) + \sum_{k=1}^m \big(\dhat(t_k)-\dhat(s_{k-1})\big) +\sum_{k=1}^m \big(\dhat(t_k)-\dhat(s_{k})\big) \right) \\
    &= y - \frac12 \left( \dhat(x) + \dhat(y) + \sum_{i=1}^{n} | \dhat(x_{i}) - \dhat(x_{i-1})| \right)\\
    & \le y - \frac12 \left(\dhat(x)+ \dhat(y) + \tv_x^y (\dhat) \right) + \frac{\varepsilon}{2},
\end{align*}
where the last equality and inequality follow from \eqref{eq:4} and \eqref{eq:TV_sup}.
We remark that the proof works also if $m=1$, in which case some of the sums in the above calculations are empty and should be understood as $0$.

By sending $\varepsilon$ to $0$, we obtain
$$C(x,y) \le y - \frac12 \left(\dhat(x)+ \dhat(y) + \tv_x^y (\dhat) \right)=\max \{ x,y \} - \frac12 \left(\dhat(x)+ \dhat(y) + \tv_{x \land y}^{x\lor y} (\dhat) \right).$$
In addition, $C(x,y) \le \min \{x,y\}$ since $C$ is a copula. It follows that the inequality \eqref{eq:zgornja} holds for any $x \le y$. If $x \ge y$ then $C(x,y)=C^t(y,x)$ and the same conclusion follows since the right hand side of \eqref{eq:zgornja} is symmetric in $x$ and $y$.
\end{proof}

For an interested reader we remark that the inequality
\begin{equation*}
C(x,y) \le \sum_{k=1}^m\delta(t_k) - \sum_{k=1}^{m-1} \delta(s_k) - \sum_{k=1}^m t_k + \sum_{k=1}^{m-1} s_k + y,
\end{equation*}
which is equivalent to inequality~\eqref{eq:indirect}, can also be obtained directly if we apply the theory of volumes of (formal) disjoint unions of rectangles and multiplicities of their vertices developed in \cite{OmSt20}. To this end one just needs to combine the rectangles depicted in Figure~\ref{fig:red-green} left with rectangles depicted in Figure~\ref{fig:red-green} right for all $k \in \{1,2,\ldots,m-1\}$, and calculate the $C$-volume of the so obtained formal disjoint union of rectangles. Since $C$ is a copula, this volume must be non-negative, which gives the above inequality  (see \cite{OmSt20} for details).

For a fixed diagonal section $\delta$ define an auxiliary function $f^\delta \colon\II^2 \to \RR$ with 
\begin{equation}\label{eq:f}
f^\delta(x,y)=y-\frac12 \big( \dhat(x)+\dhat(y)+ \tv_{x}^{y} (\dhat)\big),
\end{equation}
which we will often use in the proofs. As we will see, the function $f^\delta$ is non-zero, non-negative, has all $f^\delta$-volumes zero, but it is not grounded.

Next we show that the bound obtained in Proposition \ref{pr:bound} is attained by a copula for any point of the upper-left triangle of the unit square. 

\begin{theorem}\label{th:copula_U}
For any diagonal section $\delta$ the function $\ud \colon \II^2 \to \II$ defined by
$$\ud(x,y)=\min \left\{ x,y,y - \frac12 \left(\dhat(x)+ \dhat(y) + \tv_x^y (\dhat) \right) \right\}$$
is a copula with diagonal section $\delta$.
\end{theorem}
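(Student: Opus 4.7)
The plan is to verify that $U_\delta$ satisfies the four defining properties of a copula with diagonal $\delta$: diagonal section $\delta$, groundedness, uniform marginals, and 2-increasing. The first is immediate because $\tv_x^x(\dhat) = 0$ yields $U_\delta(x,x) = \min\{x, x, x-\dhat(x)\} = \delta(x)$. For groundedness I would evaluate at $(x,0)$: the third argument of the min equals $-\tfrac12\bigl(\dhat(x) - \tv_0^x(\dhat)\bigr)$, which is non-negative since $|\dhat(x)| = |\dhat(x)-\dhat(0)| \le \tv_0^x(\dhat)$, so $U_\delta(x,0) = 0$, and symmetrically $U_\delta(0,y) = 0$. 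For the uniform marginals I would show that the third argument exceeds $x$ at $(x,1)$ and exceeds $y$ at $(1,y)$; this uses $\dhat(1) = 0$ together with the 1-Lipschitz bounds $\dhat(u) \le 1-u$ and $\tv_u^1(\dhat) \le 1-u$, which sum to $2(1-u)$.

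The main work is 2-increasing, and the key move is to rewrite the third argument of the min as a separable sum. Define
\[
\alpha(x) = \tfrac12\bigl(\tv_0^x(\dhat) - \dhat(x)\bigr), \qquad \beta(y) = y - \tfrac12\bigl(\tv_0^y(\dhat) + \dhat(y)\bigr).
\]
Using $\tv_x^y(\dhat) = \tv_0^y(\dhat) - \tv_0^x(\dhat)$ (valid for all $x,y$ by the extended convention), one obtains $U_\delta(x,y) = \min\{x, y, \alpha(x) + \beta(y)\}$. From the a.e.\ formulas $\alpha'(x) = \max\{-\dhat'(x),0\}$ and $\beta'(y) = 1 - \max\{\dhat'(y),0\}$, both of which lie in $[0,1]$ because $|\dhat'|\le 1$, one sees that $\alpha, \beta$ are increasing and 1-Lipschitz, with $\alpha(0) = \beta(0) = 0$ and $\alpha(x) + \beta(x) = \delta(x)$. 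The crucial structural feature is that each of the three functions $x$, $y$, and $F(x,y) := \alpha(x)+\beta(y)$ has zero 2-increasing volume on every rectangle; in particular $F$ is separable, so $\vol_F(R) = 0$.

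To establish $\vol_{U_\delta}(R) \ge 0$ for $R = [a,b]\times[c,d]$, my plan is to classify each of the four corners by which of the three terms attains the minimum, giving regions $R_1 = \{U_\delta = x\}$, $R_2 = \{U_\delta = y\}$, $R_3 = \{U_\delta = \alpha(x)+\beta(y)\}$. When all four corners fall in the same region the volume is $0$ by the zero-volume property of the corresponding function. In mixed configurations the volume reduces to simple expressions such as $(b-a) - \bigl(\alpha(b) - \alpha(a)\bigr)$ or $\beta(d) - \bigl(a - \alpha(a)\bigr)$, which are non-negative by the Lipschitz bound $\alpha'\le 1$ and by the defining inequality of the region containing the relevant corner, respectively. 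I expect the main obstacle to be organizing the combinatorial case analysis for rectangles that straddle the diagonal $y=x$ simultaneously with one of the transition curves $\alpha(x)+\beta(y)=x$ or $\alpha(x)+\beta(y)=y$; such rectangles can be handled by subdividing along the transition curves and using additivity of $\vol_{U_\delta}$ under subdivision, reducing each sub-rectangle to an already-analyzed case.
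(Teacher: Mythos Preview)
Your decomposition $U_\delta(x,y)=\min\{x,y,\alpha(x)+\beta(y)\}$ with $\alpha,\beta$ increasing, $1$-Lipschitz, and $\alpha(0)=\beta(0)=0$ is exactly the paper's: the authors write the third argument as $f_1^\delta(x)+f_2^\delta(y)$ with $f_1^\delta=\alpha$ and $f_2^\delta=\beta$, and your checks of the diagonal, groundedness, and marginals match theirs almost verbatim.

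The one substantive divergence is how $2$-increasingness is obtained. Having established that $F(x,y)=\alpha(x)+\beta(y)$ is $2$-increasing (in fact has zero volume), increasing in each variable, and $1$-Lipschitz, the paper simply invokes \cite[Lemma~3.1]{DuJa}, which states that $\min\{x,y,F(x,y)\}$ is then $2$-increasing. You instead propose to reprove this by a corner-by-corner case analysis. That is a legitimate route and is essentially what the cited lemma does internally, but one part of your sketch is imprecise: you cannot ``subdivide along the transition curves'' $\alpha(x)+\beta(y)=x$ and $\alpha(x)+\beta(y)=y$, because these are curved and a subdivision of a rectangle into rectangles must use horizontal and vertical cuts. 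The workable version is either to cut at the abscissae and ordinates where these monotone curves meet the boundary of $R$ (reducing to sub-rectangles whose corner pattern is one of a short list), or to argue directly from monotonicity of the region boundaries that only a few corner configurations can occur and handle each explicitly. Either way the argument goes through; just be aware that the paper sidesteps the entire case analysis by citing the Durante--Jaworski lemma.
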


\begin{proof}
Note that $\ud(x,y)=\min \{ x,y,f^\delta(x,y) \}.$
Rewrite function $f^\delta$ as a sum of two univariate functions $f_1^\delta,f_2^\delta \colon \II \to \RR$ as follows
\begin{align*}
    f^\delta(x,y) &= y- \frac12 \dhat(x) -\frac12 \dhat(y) - \frac12 \tv_0^y (\dhat) +\frac12 \tv_0^x (\dhat) \\
    &= \left(-\frac12 \dhat(x) +\frac12 \tv_0^x (\dhat)\right) + \left(y- \frac12 \dhat(y)- \frac12 \tv_0^y (\dhat)\right) = f_1^\delta(x) + f_2^\delta(y),
\end{align*}
where 
$$ f_1^\delta(x) = -\frac12 \dhat(x) +\frac12 \tv_0^x (\dhat) \ \ \text{ and } \ \ f_2^\delta(y) = y- \frac12 \dhat(y)- \frac12 \tv_0^y(\dhat). $$

Now take an arbitrary rectangle $R=[a,b]\times [c,d] \subseteq \II^2$ and calculate $\vol_{f^\delta}(R)$. Using $f^\delta(x,y)=f_1^\delta(x)+f_2^\delta(y)$, it holds that $\vol_{f^\delta}(R)=0$. Since $R$ is arbitrary, the function $f^\delta$ is 2-increasing.

Next we show that the functions $f_1^\delta$ and $f_2^\delta$ are increasing and 1-Lipschitz. Choose $x_1,x_2 \in \II$ with $x_1\le x_2$.  Then
$$f_1^\delta(x_2)-f_1^\delta(x_1) = \frac12 \big( \tv_{x_1}^{x_2} (\dhat) - (\dhat(x_2)-\dhat(x_1)) \big) \ge 0,$$
hence $f_1^\delta$ is increasing. Note also that $\tv_0^x (\dhat)$ and $\dhat(x)$ are both 1-Lipschitz, so their difference is 2-Lipschitz, and if we apply multiplication with $\frac12$, we get that $f_1^\delta$ is 1-Lipschitz, as required. To show that $f_2^\delta$ is increasing, first choose $y_1,y_2 \in \II$ with $y_1 \le y_2$. Then, using the fact that $\tv_0^y (\dhat)$ and $\dhat(y)$ are 1-Lipschitz, we get
$$f_2^\delta(y_2)-f_2^\delta(y_1) = \frac12 \big( (y_2-y_1) -\tv_{y_1}^{y_2}( \dhat) \big) + \frac12 \big( (y_2-y_1) - (\dhat(y_2)-\dhat(y_1)) \big) \ge 0,$$
hence $f_2^\delta$ is increasing. To show that $f_2^\delta$ is 1-Lipschitz, we use the fact that $f_2^\delta$ can be written as a difference of two increasing 1-Lipschitz functions $y$ and $\frac12 (\dhat(y)+\tv_0^y(\dhat)$).
Since $f^\delta$ is $2$-increasing, increasing in each variable, and $1$-Lipschitz, we may apply \cite[Lemma~3.1]{DuJa} to conclude that $\ud(x,y)=\min \{ x,y,f^\delta(x,y) \}$ is 2-increasing.

Since $f_1^\delta(0)=f_2^\delta(0)=0$ and functions $f_1^\delta, f_2^\delta$ are increasing, they are non-negative. It follows that $\ud(x,y)=\min \{ x,y,f_1^\delta(x)+f_2^\delta(y) \}$ is grounded.
Similarly, using
\begin{align*}
    f^\delta(1,y) &= y- \frac12 \dhat(y)+ \frac12 \tv_y^1 (\dhat) = y+ \frac12 \big( \tv_y^1 (\dhat) - (\dhat(y) - \dhat(1)) \big) \ge y, \\
    f^\delta(x,1) &= 1- \frac12 \dhat(x) - \frac12 \tv_x^1 (\dhat) = x + \frac12 \big( \underbrace{ 1-x - \dhat(x)}_{\text{$\ge 0$}} \big) +\frac12 \big( \underbrace{ 1-x - \tv_x^1(\dhat)}_{\text{$\ge 0$}} \big)  \ge x,
\end{align*}
we see that $\ud$ has uniform marginals. Groundedness, 2-increasingness, and uniform marginals of $\ud$ imply that $\ud$ is a copula.

Finally, using
$$\ud(x,x)= \min \big\{ x,x, x- \frac12 \big( \dhat(x)+\dhat(x)+ \tv_x^x(\dhat) \big) \big\} = \min \{x, \delta(x) \} = \delta(x),$$
we see that the diagonal section of $\ud$ is equal to $\delta$.
\end{proof}

For any diagonal section $\delta$, using the function $f^\delta$ defined in \eqref{eq:f}, we introduce the sets
$$D_f^\circ(\delta) = \{(x,y) \in \II^2 \colon f^\delta(x,y) < \min\{x,y\}\}$$
and
\begin{align*}
    D_f(\delta) &=  \overline{D_f^\circ(\delta)} \cup \{(x,x) \in \II^2 \colon x\in \II\}, \\
    D_x(\delta) &=  \{(x,y) \in \II^2 \colon x \le y, x \le f^\delta(x,y)\}, \\
    D_y(\delta) &=  \{(x,y) \in \II^2 \colon y \le x, y \le f^\delta(x,y)\}, 
\end{align*}
where $\overline{D_f^\circ(\delta)}$ denotes the closure of the set $D_f^\circ(\delta)$.

Notice that the set $D_f(\delta)$ is not necessarily equal to the set $K=\{(x,y) \in \II^2 \colon f^\delta(x,y) \le \min\{x,y\}\}$.
Suppose for example that 
$$\delta(x)=
\begin{cases}
    0; & 0 \le x \le \frac14, \\
    2x-\frac12; & \frac14 < x \le \frac12, \\
    \frac12; & \frac12 < x \le \frac34, \\
    2x-1; & \frac34 < x \le 1.
\end{cases}$$
Assume that $(x,y) \in [0,\frac12] \times [\frac12,1]$. Since the function $f_1^\delta$ is $1$-Lipschitz and $f_2^\delta$ is increasing, we have 
$$\textstyle f^\delta(x,y) = f_1^\delta(x) + f_2^\delta(y) \ge f_1^\delta(\frac12) + x - \frac12 + f_2^\delta(\frac12) = x + f^\delta(\frac12,\frac12) - \frac12 = x.$$ 
It follows that $[0,\frac12) \times (\frac12,1] \cap D_f(\delta) = \emptyset$. On the other hand, $\tv_{\frac14}^{\frac34} (\dhat) = \frac12$, so that $f^\delta(\frac14,\frac34) = \frac14$ and $(\frac14,\frac34) \in K \setminus D_f(\delta)$.

If the whole rectangle $R=[a,b]\times[c,d]$ is included in either of the sets $D_f(\delta), D_x(\delta)$, or $D_y(\delta)$, then $\vol_{U_\delta}(R) =0$.
Since $D_f(\delta) \cup D_x(\delta) \cup D_y(\delta) = \II^2$, it follows that all the mass of copula $U_\delta$ is concentrated in the (common) boundary of the sets $D_f(\delta), D_x(\delta), D_y(\delta)$, i.e., in the set $\big(D_f(\delta) \cap D_x(\delta)\big) \cup \big(D_f(\delta) \cap D_y(\delta)\big)$.

Furthermore, we define functions $g^\delta_U, g^\delta_L \colon \II \to \II$ by
\begin{align*}
    g^\delta_U(x) &=  \max\{y \in \II \colon (x,y) \in D_f(\delta)\}, \\
    g^\delta_L(x) &=  \min\{y \in \II \colon (x,y) \in D_f(\delta)\}. 
\end{align*}
Note that max and min are attained since the set $D_f(\delta)$ is closed.
Since the main diagonal of the square $\II^2$ is included in $D_f(\delta)$, the functions $g^\delta_U$ and $g^\delta_L$ are well defined
and $g^\delta_L(x) \le x \le g^\delta_U(x)$ for all $x \in \II$.

\begin{lemma}\label{lem:g_U}
The sets $D_x(\delta)$, $D_y(\delta)$, $D_f(\delta)$ and functions $g^\delta_U$ and $g^\delta_L$ have the following properties: 
\begin{enumerate}[$(i)$]
\item For any point $(x,y) \in \II^2$ let $\Delta_{(x,y)}$ be a triangle with vertices $(x,y)$, $(x,x)$ and $(y,y)$. If $(x,y) \in D_f(\delta)$, then $\Delta_{(x,y)} \subseteq D_f(\delta)$.
\item The functions $g^\delta_U$ and $g^\delta_L$ are increasing. 
\item The set $D_f(\delta) \cap D_x(\delta)$ is a union of the graph of $g^\delta_U$ and the vertical segments $\{x\} \times [\lim_{t \nearrow x} g^\delta_U(t),\lim_{t \searrow x} g^\delta_U(t)]$ for all points $x$ where $g^\delta_U$ is not continuous.
Similarly, the set $D_f(\delta) \cap D_y(\delta)$ is a union of the graph of $g^\delta_L$ and the vertical segments $\{x\} \times [\lim_{t \nearrow x} g^\delta_L(t),\lim_{t \searrow x} g^\delta_L(t)]$ for all points $x$ where $g^\delta_L$ is not continuous.
\item The function $g^\delta_U$ is right-continuous and $g^\delta_L$ is left-continuous. 
\end{enumerate}
\end{lemma}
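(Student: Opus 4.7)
Plan: throughout the argument I will exploit the additive decomposition $f^\delta(x,y) = f_1^\delta(x) + f_2^\delta(y)$ from the proof of Theorem~\ref{th:copula_U}, using that both $f_1^\delta$ and $f_2^\delta$ are increasing and $1$-Lipschitz.

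For $(i)$, assume $x \le y$ (the case $x > y$ is symmetric by swapping the roles of $f_1^\delta$ and $f_2^\delta$). If $(x,y) \in D_f^\circ(\delta)$, then for any $(a,b) \in \Delta_{(x,y)}$ with $x \le a \le b \le y$ the algebraic identity
\begin{equation*}
a - f^\delta(a,b) = [x - f^\delta(x,y)] + [(a-x) - (f_1^\delta(a) - f_1^\delta(x))] + [f_2^\delta(y) - f_2^\delta(b)]
\end{equation*}
writes $a - f^\delta(a,b)$ as a sum of three nonnegative terms, the first strictly positive by the $D_f^\circ$ assumption, the second by the $1$-Lipschitz property of $f_1^\delta$, the third by monotonicity of $f_2^\delta$. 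Hence $(a,b) \in D_f^\circ(\delta)$ when $a < b$, and points with $a = b$ lie in $D_f(\delta)$ by the diagonal clause. For $(x,y) \in \overline{D_f^\circ} \setminus D_f^\circ$ I approximate from the interior and invoke closedness of $D_f(\delta)$. Part $(ii)$ is then immediate: for $x_1 \le x_2$, either $x_2 \ge g^\delta_U(x_1)$ and the diagonal membership gives $g^\delta_U(x_2) \ge x_2 \ge g^\delta_U(x_1)$, or the point $(x_2, g^\delta_U(x_1))$ sits in $\Delta_{(x_1, g^\delta_U(x_1))} \subseteq D_f(\delta)$ by $(i)$; the argument for $g^\delta_L$ is the symmetric one. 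For $(iv)$, since $g^\delta_U$ is increasing, $L = \lim_{t \searrow x} g^\delta_U(t)$ exists and $\ge g^\delta_U(x)$; for a sequence $t_n \searrow x$ the points $(t_n, g^\delta_U(t_n))$ lie in the closed set $D_f(\delta)$ and converge to $(x,L)$, so $(x,L) \in D_f(\delta)$ and maximality of $g^\delta_U(x)$ forces equality. Left-continuity of $g^\delta_L$ is obtained the same way.

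The main obstacle is $(iii)$. The crucial observation is that for $(x,y) \in D_f \cap D_x$ with $x < y$ one must have $f^\delta(x,y) = x$: the inequality $\ge$ is the definition of $D_x$, while $\le$ comes from $(x,y) \in \overline{D_f^\circ}$ together with continuity of $f^\delta$. Via the decomposition this becomes $f_2^\delta(y) = x - f_1^\delta(x)$, so $y$ lies in a level set of the increasing function $f_2^\delta$ --- a single point or a closed interval, the latter happening exactly on plateaus of $f_2^\delta$, which coincide with intervals where $\delta$ is constant. By maximality, the upper endpoint of this level set is $g^\delta_U(x)$. For the forward inclusion I verify that $f^\delta(x, g^\delta_U(x)) = x$: the bound $\le x$ follows from closure, and the reverse bound is obtained by contradiction from maximality using continuity of $f^\delta$, with the boundary case $g^\delta_U(x) = 1$ taken care of by the estimate $f^\delta(x,1) \ge x$ already established in Theorem~\ref{th:copula_U}. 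Any $y$ in the jump interval $[\lim_{t \nearrow x} g^\delta_U(t),\, g^\delta_U(x)]$ belongs to $D_f$ by applying $(i)$ to $(x, g^\delta_U(x))$, and to $D_x$ by passing to the limit in the equalities $f^\delta(t_n, g^\delta_U(t_n)) = t_n$ for $t_n \nearrow x$ and using continuity plus monotonicity of $f_2^\delta$. Conversely, any $(x,y) \in D_f \cap D_x$ with $x < y < g^\delta_U(x)$ forces $f_2^\delta$ to be constant on $[y, g^\delta_U(x)]$, and tracking the $t$'s where $t - f_1^\delta(t) = f_2^\delta(y)$ shows $y \ge \lim_{t \nearrow x} g^\delta_U(t)$. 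The statement for $D_f \cap D_y$ and $g^\delta_L$ follows by reflecting across the diagonal. The delicate point throughout is careful handling of plateaus of $f_2^\delta$ and matching the endpoints of its level sets with the one-sided limits of $g^\delta_U$.
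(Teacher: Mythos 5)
Parts $(i)$, $(ii)$ and $(iv)$ of your argument are correct and essentially follow the paper's route (the paper proves $(i)$ by taking a sequence in $D_f^\circ(\delta)$ converging to $(x,y)$ and using monotonicity and the $1$-Lipschitz property of $f^\delta$ directly, which is the same computation as your three-term identity; your direct proof of $(iv)$ via closedness of $D_f(\delta)$ is in fact slightly cleaner than the paper's, which routes through $(iii)$). The forward inclusion in $(iii)$ is also fine.

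The genuine gap is in the converse inclusion of $(iii)$. Your intermediate claim that ``by maximality, the upper endpoint of this level set is $g^\delta_U(x)$'' is false if ``this level set'' means the level set of $f_2^\delta$: the paper's example right after the definition of $D_f(\delta)$ (the diagonal with $\dhat$ a double tent) has $f^\delta(\tfrac14,y)=\tfrac14$ for all $y\in[\tfrac12,\tfrac34]$, yet $g^\delta_U(\tfrac14)=\tfrac12$, because $(\tfrac14,\tfrac34)$ lies in $K=\{f^\delta\le\min\}$ but not in $D_f(\delta)=\overline{D_f^\circ(\delta)}\cup\{(x,x)\}$. This is exactly the distinction the paper warns about, and it means that no argument based only on the values of $f_1^\delta$ and $f_2^\delta$ (your ``tracking the $t$'s where $t-f_1^\delta(t)=f_2^\delta(y)$'') can establish $y\ge\lim_{t\nearrow x}g^\delta_U(t)$: for every such $t$ one only gets $f^\delta(t,y)=t$, which is consistent with $(t,y)$ lying in $K\setminus D_f(\delta)$ and yields no contradiction. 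The step that is actually needed is topological and uses part $(i)$ with strictness in the interior: if some $t<x$ had $g^\delta_U(t)>y$, then approximating $(t,g^\delta_U(t))$ by points $(t_i,u_i)\in D_f^\circ(\delta)$ with $t_i<x<y<u_i$ gives $f^\delta(x,y)\le f^\delta(t_i,u_i)+(x-t_i)<x$, contradicting $(x,y)\in D_x(\delta)$. This is how the paper closes the argument (it shows $\lim_{t\nearrow x}g^\delta_U(t)\le\min A$ for $A=\{t\colon(x,t)\in D_f(\delta)\cap D_x(\delta)\}$, after first proving $A$ is the interval $[\min A,\max A]$ on which $f^\delta(x,\cdot)\equiv x$). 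Replace the level-set tracking by this triangle argument and your proof of $(iii)$ is complete.
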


\begin{proof}
$(i)$:  The claim is clear if $x=y$. Now assume $x<y$ and let $(z,w) \in \II^2$ such that $x< z < w < y$, i.e., $(z,w)$ lies in the interior of $\Delta_{(x,y)}$. Since $(x,y) \in D_f(\delta)$ and $x<y$ it follows that $(x,y) \in \overline{D_f^\circ(\delta)}$, so there exists a sequence $(x_i,y_i) \in D_f^\circ(\delta)$ with $x_i\le y_i$ that converges to $(x,y)$. Hence, $f^\delta(x_i,y_i) < \min\{x_i,y_i\}=x_i$ and $x_i<z<w<y_i$ for sufficiently large $i$. The increasingness and $1$-Lipschitz property of $f^\delta$ imply 
$$f^\delta(z,w) \le f^\delta(z,y_i) \le f^\delta(x_i,y_i)+(z-x_i)<x_i+(z-x_i)=z,$$
so that $(z,w) \in D_f^\circ(\delta)$. Therefore, the interior of $\Delta_{(x,y)}$ is a subset of $D_f^\circ(\delta)$ and consequently $\Delta_{(x,y)} \subseteq D_f(\delta)$. The proof in case $x>y$ is similar.

$(ii)$: Suppose that $x < y$. If $g^\delta_U(x) \le y$, then $g^\delta_U(y) \ge y \ge g^\delta_U(x)$. So assume that $g^\delta_U(x) \ge y$. The point  $(x,g^\delta_U(x))$ belongs to $D_f(\delta)$ by definition, so that $\Delta_{(x,g^\delta_U(x))} \subseteq D_f(\delta)$ by $(i)$. Since $(y,g^\delta_U(x)) \in \Delta_{(x,g^\delta_U(x))}$ it follows that $g^\delta_U(y) \ge g^\delta_U(x)$. The proof for the function $g^\delta_L$ is similar.

$(iii)$: We first prove that the graph of $g^\delta_U$ is included in $D_f(\delta) \cap D_x(\delta)$. Suppose that the point $(x, y)$ belongs to the graph of $g^\delta_U$. Then $y = g^\delta_U(x)$ and $(x, y) \in D_f(\delta)$. Furthermore, for any $t > y$ we have $(x, t) \notin D_f(\delta)$, so $(x, t) \in D_x(\delta)$. Since $D_x(\delta)$ is closed, also $(x, y) \in D_x(\delta)$, so that $(x, y) \in D_f(\delta) \cap D_x(\delta)$. 

Let $x \in \II$ and define the set $A = \{t \in \II \colon (x,t) \in D_f(\delta) \cap D_x(\delta)\}$. It is closed and non-empty, since $D_f(\delta) \cap D_x(\delta)$ contains the graph of $g^\delta_U$. Let $y_1 = \min A$ and $y_2 = \max A$.  Since $(x, y_2) \in D_f(\delta)$, we have $f^\delta(x,y_2) \le x$ and since $(x, y_1) \in D_x(\delta)$, we have $f^\delta(x,y_1) \ge x$. Since  $f^\delta(x,y)$ is increasing in $y$, it follows that $f^\delta(x,y) = x$ for any $y \in  [y_1, y_2]$, so $\{x\} \times [y_1, y_2] \subseteq D_x(\delta)$. Furthermore, since $(x, y_2) \in D_f(\delta)$, we have $\{x\} \times [y_1, y_2] \subseteq D_f(\delta)$ by $(i)$. This implies that $A=[y_1, y_2]$.

Suppose that $\lim_{t \nearrow x} g^\delta_U(t) > y_1$. Then there exists $t < x$, such that $g^\delta_U(t) > y_1$. Since the point $(t, g^\delta_U(t)) \in D_f(\delta)$, it follows that $(x, y_1) \in D_f^\circ(\delta)$ by $(i)$, which is a contradiction with $(x, y_1) \in D_x(\delta)$.
It follows that $\lim_{t \nearrow x} g^\delta_U(t) \le y_1$. Furthermore, $\lim_{t \searrow x} g^\delta_U(t) \ge y_2$, since $(x,y_2) \in D_f(\delta)$ and thus $g^\delta_U(t) \ge y_2$ for any $t > x$ by $(i)$. It follows that $A \subseteq [\lim_{t \nearrow x} g^\delta_U(t),\lim_{t \searrow x} g^\delta_U(t)]$. On the other hand, both points $(x, \lim_{t \nearrow x} g^\delta_U(t))$ and $(x, \lim_{t \searrow x} g^\delta_U(t))$, belong to $D_f(\delta) \cap D_x(\delta)$, so also $[\lim_{t \nearrow x} g^\delta_U(t),\lim_{t \searrow x} g^\delta_U(t)] \subseteq A$. This finishes the proof for the set $D_f(\delta) \cap D_x(\delta)$. The proof for the set $D_f(\delta) \cap D_y(\delta)$ is analogous. 

$(iv)$: Suppose that the function $g^\delta_U$ is not continuous at $x \in \II$. Then the set $A = \{t \in \II \colon (x,t) \in D_f(\delta) \cap D_x(\delta)\}$
equals the interval $[\lim_{t \nearrow x} g^\delta_U(t),\lim_{t \searrow x} g^\delta_U(t)]$ by $(iii)$. If follows that $(x,\lim_{t \searrow x} g^\delta_U(t)) \in D_f(\delta)$, hence, $\lim_{t \searrow x} g^\delta_U(t) \le g^\delta_U(x)$. Furthermore, $\lim_{t \searrow x} g^\delta_U(t) \ge g^\delta_U(x)$ since $g^\delta_U$ is increasing. Thus $g^\delta_U$ is right-continuous at $x$.
The proof for the function $g^\delta_L$ goes the same way.
\end{proof}

Since a copula cannot have any mass on vertical segments, the above lemma implies that all the mass of copula $U_\delta$ is concentrated on the graphs of functions  $g^\delta_U$ and $g^\delta_L$.

Next example shows how the copula $\ud$ can be computed for a given diagonal section $\delta$. 

\begin{example}\label{ex:x^2}
Let $\delta$ be a diagonal section defined by $\delta(x)=x^2$ for all $x\in \II$. Then 
$$f_1^\delta(x)=\frac12 \big( \tv_0^x(\dhat) -\dhat(x) \big)=
\begin{cases}
    0; & x \le \frac12, \\
     x^2-x+\frac14; & x\ge \frac12.
\end{cases}$$
Similarly,
$$f_2^\delta(y)=y-\frac12 \big( \dhat(y)+ \tv_0^y (\dhat) \big) =
\begin{cases}
    y^2; & y \le \frac12, \\
    y- \frac14; & y \ge \frac12.
\end{cases}$$
We are now able to explicitly state $f^\delta(x,y)$:
$$f^\delta(x,y)=
\begin{cases}
    y^2; & (x,y) \in [0,\frac12] \times [0,\frac12], \\
    x^2-x+\frac14+y^2; & (x,y) \in [\frac12,1] \times [ 0, \frac12 ], \\
    y-\frac14; & (x,y) \in [0,\frac12] \times [\frac12,1], \\
    x^2-x+y; & (x,y) \in [\frac12,1] \times [\frac12,1],
\end{cases}$$
and $\ud(x,y)$:
$$\ud(x,y)=
\begin{cases}
    x; &  g^\delta_U(x) \le y, \\
    f^\delta(x,y); & g^\delta_L(x) \le y \le g^\delta_U(x), \\
    y; & y \le g^\delta_L(x),
\end{cases}
$$
where
$$ g^\delta_U(x) =
\begin{cases}
    \sqrt{x}; & 0 \le x \le \frac14, \\
    x+ \frac14; & \frac14 \le x \le \frac12, \\
    2x-x^2; & \frac12 \le x \le 1,
\end{cases}
\quad \textrm{ and } \quad
g^\delta_L(x)=
\begin{cases}
    0; & 0 \le x \le \frac12, \\
    \frac12 - \sqrt{x-x^2}; & \frac12 \le x \le 1.
\end{cases}
$$
\begin{figure}\label{fig:x_na_2}
\begin{center}
\includegraphics[align=c,width=0.35\textwidth]{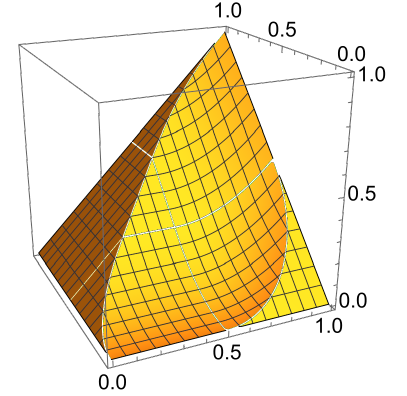}
\hspace{1cm}
\includegraphics[align=c,width=0.35\textwidth]{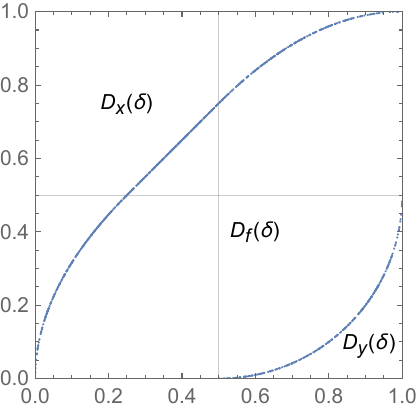}
\end{center}
\caption{The graph of copula $\ud$ with $\delta(x)=x^2$ (left) and its scatterplot with the corresponding regions $D_f(\delta)$, $D_x(\delta)$, and $D_y(\delta)$ (right). 
}
\end{figure}
\end{example}

We now collect our findings in our main theorem, which gives the formula for the exact upper bound for copulas with prescribed diagonal section, and thus settles an open question mentioned in the introduction.

\begin{theorem}\label{th:cc}
For any diagonal section $\delta$ we have
$$\cc(x,y) =  \min \left\{ x,y,\max \{ x,y \} - \frac12 \left(\dhat(x)+ \dhat(y) + \tv_{x \land y}^{x\lor y} (\dhat) \right) \right\}$$
for all $x,y \in \II$.
\end{theorem}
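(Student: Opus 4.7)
The plan is to combine the two ingredients already assembled in the previous results. Proposition~\ref{pr:bound} provides a pointwise upper bound valid for \emph{every} copula with diagonal section $\delta$, while Theorem~\ref{th:copula_U} exhibits a specific copula $\ud$ whose values realize this bound on half of the square. Write the right-hand side of the claimed formula as
\begin{equation*}
M(x,y) = \min\left\{x, y,\ \max\{x,y\} - \frac{1}{2}\bigl(\dhat(x) + \dhat(y) + \tv_{x\land y}^{x\lor y}(\dhat)\bigr)\right\},
\end{equation*}
and observe that $M$ is visibly symmetric in $x$ and $y$, since each of $\max\{x,y\}$, $\dhat(x) + \dhat(y)$, and the total variation on $[x \land y, x \lor y]$ depends only on the unordered pair $\{x,y\}$.

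The inequality $\cc(x,y) \le M(x,y)$ is immediate: Proposition~\ref{pr:bound} gives $C(x,y) \le M(x,y)$ for every copula $C$ with $\delta_C = \delta$, and taking the supremum over such $C$ reproduces the definition of $\cc(x,y)$.

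For the reverse inequality I split according to which side of the main diagonal the point $(x,y)$ lies on. If $x \le y$, then $\max\{x,y\} = y$ and $\tv_{x\land y}^{x\lor y}(\dhat) = \tv_x^y(\dhat)$, so comparing with the formula in Theorem~\ref{th:copula_U} shows $\ud(x,y) = M(x,y)$; since $\ud \in \mathcal{C}$ has diagonal section $\delta$, this yields $\cc(x,y) \ge \ud(x,y) = M(x,y)$. If $x \ge y$, the same argument applied to the transpose $\ud^t$ (which remains a copula with diagonal section $\delta$, because transposition preserves both properties) gives $\cc(x,y) \ge \ud^t(x,y) = \ud(y,x) = M(y,x) = M(x,y)$, the last equality by symmetry of $M$. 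No genuinely hard step remains, since all substantive content has been absorbed into Proposition~\ref{pr:bound} and Theorem~\ref{th:copula_U}; the only minor subtlety is that $\ud$ itself is not symmetric, so the construction realizes the extremal value only above the diagonal and one must pass to $\ud^t$ to cover the other half of the square.
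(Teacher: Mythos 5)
Your proposal is correct and takes essentially the same route as the paper: the paper also observes that the right-hand side equals $\max\{\ud(x,y),\ud^t(x,y)\}$ and combines Proposition~\ref{pr:bound} (upper bound) with Theorem~\ref{th:copula_U} (attainment by the copulas $\ud$ and $\ud^t$). Your explicit case split along the diagonal and the remark on the symmetry of the bound are just a slightly more spelled-out version of the same argument.
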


\begin{proof} Note that
\begin{equation*}
       \min \left\{ x,y, \max \{ x,y \} - \frac12 \left(\dhat(x)+ \dhat(y) + \tv_{x \land y}^{x\lor y} (\dhat) \right) \right\} =\max \{ \ud(x,y), U^t_\delta(x,y)\}   
\end{equation*}
for all $x,y \in \II$.
By Theorem~\ref{th:copula_U} the functions $\ud$ and $\ud^t$ are copulas. The result now easily follows from Proposition~\ref{pr:bound}.
\end{proof}

Note that in the language of Section~\ref{sec:asymmetry} quasi-copula $\cc$ is a diagonal splice of copulas $\ud$ and $\ud^t$, i.e.,
$$\cc(x,y)=\max \{ \ud(x,y), U^t_\delta(x,y)\}=(\ud \boxslash \ud^t) (x,y)=
\begin{cases}
        \ud(x,y); & x \le y, \\
        \ud^t(x,y); & y \le x.
     \end{cases}
     $$

The next example demonstrates the difference between the quasi-copulas $\kd$, $\cc$ and $\ad$.

\begin{example}\label{ex:KCA}
Let $\delta$ be a diagonal section defined by
$$
\delta(x)=
\begin{cases}
 0; & 0 \le x\le \frac{3}{10}, \\
 2x-\frac{3}{5}; & \frac{3}{10}\le x\le \frac{9}{20}, \\
 x-\frac{3}{20}; & \frac{9}{20}\le x\le \frac{11}{20}, \\
 \frac{2}{5}; & \frac{11}{20}\le x\le \frac{7}{10}, \\
 2 x-1; & \frac{7}{10}\le x \le 1. \\
\end{cases}
$$
The graph of the corresponding function $\dhat$ is shown in Figure~\ref{fig:CequalsA} (left).
As evident from the graphs depicted in Figure~\ref{fig:Kd_Cd_Ad}, quasi-copula $\cc$ is different from both $\kd$ and $\ad$ in this case, so in the point-wise order it lies strictly between them.
We remark that at point $(\frac{3}{10},\frac{7}{10})$, i.e., at one of the "indentations" of the graph of $\cc$, we have values $\kd(\frac{3}{10},\frac{7}{10})=\frac{1}{5}$, $\cc(\frac{3}{10},\frac{7}{10})=\frac{1}{4}$, and $\ad(\frac{3}{10},\frac{7}{10})=\frac{3}{10}$.
This point is one of the points where the difference between $\cc$ and $\ad$ is the biggest for this $\delta$.

\begin{figure}
    \centering
    \includegraphics[width=0.32\textwidth]{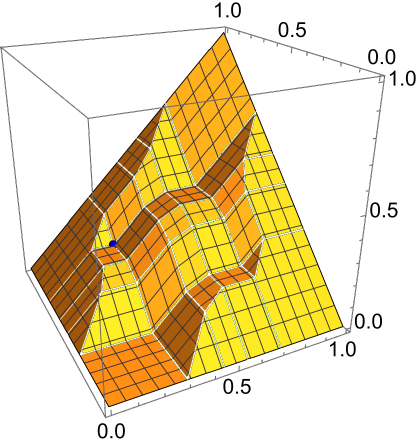}~
    \includegraphics[width=0.32\textwidth]{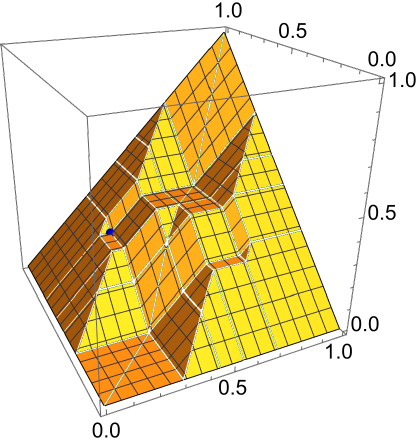}~
    \includegraphics[width=0.32\textwidth]{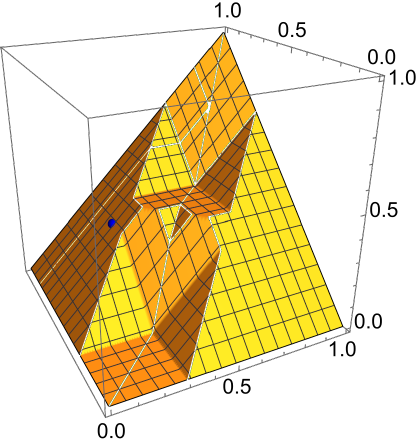}
    \caption{The graphs of functions $\kd$ (left), $\cc$ (middle), and $\ad$ (right) with $\delta$ from Example~\ref{ex:KCA}.}
    \label{fig:Kd_Cd_Ad}
\end{figure}
\end{example}

In \cite[Theorem 20]{NeQuRoUb} the authors prove that quasi-copula $\cc$ is a copula if and only if $\cc=\kd$, and pose a question to characterize diagonal sections $\delta$ for which this holds. Our next theorem answers this question.

\begin{theorem}\label{th:CequalK}
For any diagonal section $\delta$ the following are equivalent:
\begin{enumerate}[$(i)$]
\item $\cc=\kd$,
\item $\delta'(x) \in \{0,2\}$ almost everywhere on the set $\Lambda=\{ x \in \II \colon \delta(x) < x\}$. 
 \end{enumerate}
\end{theorem}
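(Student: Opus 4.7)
The plan is to translate the equivalence into a clean comparison of the ``third arguments'' in the formulas for $\kd$ and $\cc$. Fix $x \le y$ and write $\kd(x,y) = \min\{x, A\}$ with $A = \tfrac{1}{2}(\delta(x)+\delta(y))$, and $\cc(x,y) = \min\{x, B\}$ with $B = y - \tfrac{1}{2}(\dhat(x)+\dhat(y)+\tv_x^y(\dhat))$. Using the identity $\delta(t) = t - \dhat(t)$, a short calculation gives
\[
B - A = \tfrac{1}{2}\bigl((y-x) - \tv_x^y(\dhat)\bigr) \ge 0,
\]
which already shows $\cc \ge \kd$ pointwise. A short case analysis (using $B \ge A$) then yields that $\cc(x,y) > \kd(x,y)$ holds if and only if \emph{both} $A < x$ (equivalently, $\dhat(x)+\dhat(y) > y-x$) and $\tv_x^y(\dhat) < y-x$ hold. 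The rest of the proof is about when such a point can exist.

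For (ii) $\Rightarrow$ (i) I would first establish the geometric lemma: if some $t^* \in [x,y]$ satisfies $\dhat(t^*) = 0$, then $\delta(x) + \delta(y) \ge 2x$. This is immediate from Proposition~\ref{pr:delta}: monotonicity of $\delta$ gives $\delta(y) \ge \delta(t^*) = t^*$, while the 2-Lipschitz bound applied on $[x, t^*]$ yields $\delta(x) \ge 2x - t^*$; summing gives the claim. Its contrapositive shows that $A < x$ forces $[x,y] \subseteq \Lambda$. Under hypothesis (ii), $|\dhat'| = 1$ almost everywhere on $\Lambda$, so formula~\eqref{eq:tvint} gives $\tv_x^y(\dhat) = y - x$ whenever $[x,y] \subseteq \Lambda$, forcing $B = A$. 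Hence the two conditions $A<x$ and $\tv_x^y(\dhat) < y-x$ cannot coexist, and $\cc = \kd$.

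For (i) $\Rightarrow$ (ii) I would argue the contrapositive. If (ii) fails, the set $E = \{t \in \Lambda : |\dhat'(t)| < 1\}$ has positive Lebesgue measure and hence meets some connected component $(a,b)$ of $\Lambda$ in positive measure. Let $t_0$ be a Lebesgue density point of $E \cap (a,b)$; then $\dhat(t_0) > 0$. For $\varepsilon > 0$ small enough that $[t_0-\varepsilon, t_0+\varepsilon] \subseteq (a,b)$ and $\varepsilon < \dhat(t_0)/2$, set $x = t_0-\varepsilon$ and $y = t_0+\varepsilon$. The 1-Lipschitz property of $\dhat$ gives $\dhat(x)+\dhat(y) \ge 2\dhat(t_0) - 2\varepsilon > 2\varepsilon = y - x$, so $A < x$. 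The density property gives $m([x,y] \cap E) > 0$; splitting the integral $\int_x^y |\dhat'(t)|\,dt$ between $E \cap [x,y]$ (where $|\dhat'|<1$ strictly, so this piece is strictly less than $m(E \cap [x,y])$) and $[x,y] \setminus E \subseteq \Lambda$ (where $|\dhat'|=1$ a.e.) produces $\tv_x^y(\dhat) < y - x$. Therefore $\cc(x,y) > \kd(x,y)$ and (i) fails.

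The main obstacle is securing both $A < x$ and $\tv_x^y(\dhat) < y-x$ at the \emph{same} point in the (i) $\Rightarrow$ (ii) direction: the first pushes $y-x$ to be small relative to the nonvanishing quantity $\dhat(x)+\dhat(y)$, while the second requires $[x,y]$ to carry enough of the ``slow'' set $E$. Shrinking symmetrically around a Lebesgue density point of $E$ inside a single component of $\Lambda$ is the natural choice that achieves both at once; without the density-point choice one would risk missing the required variation deficit, and without staying in a single component of $\Lambda$ the geometric lemma of the first direction would immediately rule out the configuration.
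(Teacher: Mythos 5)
Your proof is correct, and while it rests on the same two pillars as the paper's --- the identity $B-A=\tfrac12\bigl((y-x)-\tv_x^y(\dhat)\bigr)$ reducing everything to whether the total variation of $\dhat$ saturates the Lipschitz bound, and formula~\eqref{eq:tvint} translating that into $|\dhat'|=1$ a.e. --- the two sub-arguments you use to localize the problem to $\Lambda$ are genuinely different from the paper's. For (ii) $\Rightarrow$ (i), the paper decomposes $\Lambda$ into its open components $\Lambda_i$ and shows, via the ordinal-sum structure, that \emph{every} copula with diagonal $\delta$ agrees with $M$ off $\bigcup_i \Lambda_i\times\Lambda_i$, whereas you bypass the component decomposition entirely with the elementary estimate $\delta(x)+\delta(y)\ge 2x$ whenever $\dhat$ vanishes somewhere in $[x,y]$; this is shorter and makes the logical role of the hypothesis $A<x$ transparent. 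For (i) $\Rightarrow$ (ii), the paper argues directly: for each $x\in\Lambda$ it produces a symmetric interval on which $|\dhat'|=1$ a.e.\ and then extracts a countable subcover of $\Lambda$; you argue the contrapositive by shrinking symmetrically around a Lebesgue density point of the exceptional set $E=\{t\in\Lambda: |\dhat'(t)|<1\}$. These are essentially dual arguments of comparable difficulty; yours trades the covering step for the density theorem. One small point worth making explicit in your write-up: in the case analysis, when $A\ge x$ you should note that $B\ge A\ge x$ forces $\cc(x,y)=x=\kd(x,y)$, so equality indeed holds there --- you use this implicitly but it is the half of the equivalence that makes the ``only if'' in your characterization of strict inequality valid.
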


\begin{proof}
Suppose first that $(ii)$ holds.
The set $\Lambda$ is an open subset of $\II$, hence there exists a countable collection of disjoint open intervals $\{ \Lambda_i \}_{i\in J}$ such that $\Lambda= \bigcup_{i\in J} \Lambda_i$.

Let $C$ be any copula with $\delta_C = \delta$. Suppose that $(x,y) \in \II^2 \setminus \bigcup_{i\in J} (\Lambda_i\times \Lambda_i)$ and $x \le y$. 
Then either $x \in \II \setminus \bigcup_{i\in J} \Lambda_i$ or $x \in \Lambda_i$ for some $i\in J$ and $y \notin \Lambda_i$. 
In the first case $C(x,y) \ge C(x,x) = \delta(x) = x$, so $C(x,y) = M(x,y)$. 
In the second case let $\Lambda_i=(a,b)$. It follows that $y \ge b$. 
Since $C(a,a) = a$ and $C(b,b) = b$, we obtain that $C(a,b) \ge C(a,a) = a$ and $C(a,b) \le M(a,b) = a$, so that $C(a,b) = a$. Furthermore, since $x \in (a,b)$, we have $C(x,b) \le C(a,b) + x - a = x$ and $C(x,b) \ge C(b,b) + x - b = x$, so that $C(x,b) = x$.
It follows that $C(x,y) \ge C(x,b) = x$ and $C(x,y) = M(x,y)$ again. A similar argument shows that $C(x,y) = M(x,y)$ also for any $(x,y) \in \II^2 \setminus \bigcup_{i\in J} (\Lambda_i\times \Lambda_i)$ with $x \ge y$.

Therefore $\cc$ and $\kd$ match on the set $\II^2 \setminus \bigcup_{i\in J} (\Lambda_i\times \Lambda_i)$. Now let $i\in J$ and $\Lambda_i=(a,b)$.
Then $\delta(a)=a$ and $\delta(b)=b$, but $\delta(t)<t$ for all $t\in (a,b)$.
By assumption it holds that $\delta'(t) \in \{0, 2\}$ a.e. on $(a,b)$, or equivalently, $\dhat'(t) \in \{-1,1\}$ a.e. on $(a,b)$. By equation \eqref{eq:tvint} we have 
$$ \tv_x^y(\dhat) = \int_x^y |\dhat'(t)|dt = y-x $$
for every $x,y\in (a,b)$ with $x\le y$. It follows that $f^\delta(x,y) = \frac12 (\delta(x)+ \delta(y))$, so $\cc(x,y)=\kd(x,y)$ for every $(x,y) \in \Lambda_i\times \Lambda_i$.

Conversely, assume that $\kd=\cc$. Take any $x\in \Lambda$, so that $\delta(x)<x$. Recall the definition of function $f^\delta$ from equation \eqref{eq:f} and define a function $\varphi \colon (\max\{-x,x-1\}, \min\{x,1-x\}) \to \RR$ by $\varphi(t)=f^\delta(x-t,x+t)-(x-t)$. Since $x \in (0,1)$, function $\varphi$ is defined on a nonempty interval containing $0$. Function $\varphi$ is continuous and has value $\varphi(0)=f^\delta(x,x)-x=\delta(x)-x<0$. Hence, there exists $\varepsilon=\varepsilon_x>0$ such that $\varphi(\varepsilon)<0$. This implies $f^\delta(x-\varepsilon,x+\varepsilon)<x-\varepsilon<x+\varepsilon$, so that $(x-\varepsilon,x+\varepsilon) \in D_f(\delta)$.
Now,
$$\kd(x-\varepsilon,x+\varepsilon)=\cc(x-\varepsilon,x+\varepsilon)=f^\delta(x-\varepsilon,x+\varepsilon)<x-\varepsilon.$$
By Theorem~\ref{th:ABK} $(iii)$ we have $\kd(x-\varepsilon,x+\varepsilon)=\frac12 (\delta(x-\varepsilon)+ \delta(x+\varepsilon))$.
So $f^\delta(x-\varepsilon,x+\varepsilon)=\frac12 (\delta(x-\varepsilon)+ \delta(x+\varepsilon))$, and by the definition of $f^\delta$ we infer $\tv_{x-\varepsilon}^{x+\varepsilon}(\dhat) =2\varepsilon$. Since $|\dhat'(t)| \le 1$ where it exists (it exists a.e. on $\II)$, it follows from equation \eqref{eq:tvint} that $|\dhat'(t)|=1$ a.e. on the interval $(x-\varepsilon,x+\varepsilon)$. Hence, $\delta'(t) \in \{0,2\}$ a.e. on $(x-\varepsilon,x+\varepsilon)$.

The family $\mathcal{F}$ of intervals $(x-\varepsilon_x,x+\varepsilon_x)$ for all $x \in \Lambda$ is an open cover of $\Lambda$. Since $\Lambda$ is second-countable, there is a countable subfamily of $\mathcal{F}$ which still covers $\Lambda$.
It follows that $\delta'(x) \in \{0,2\}$ a.e. on $\Lambda$.
\end{proof}

In \cite[Theorem~32]{NeQuRoUb} it was shown that for a simple diagonal section $\delta$ (i.e., $\dhat$ unimodal), we have $\cc=\ad$. Next theorem gives a complete characterization of diagonal sections $\delta$ for which the quasi-copula $\cc$ is equal to $\ad$. 

\begin{theorem}\label{th:CequalA}
For any diagonal section $\delta$ the following are equivalent:
\begin{enumerate}[$(i)$]
\item $\cc=\ad$,
\item $y-x \ge \max\{\dhat(x),\dhat(y)\}$ whenever $x,y \in \II$ are such that $x<y$, $\dhat'(x),\dhat'(y)$ exist, and $\dhat'(x)<0<\dhat'(y)$.
 \end{enumerate}
\end{theorem}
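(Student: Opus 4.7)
The plan is to reduce both implications to a pointwise dichotomy. By Theorems~\ref{th:ABK} and \ref{th:cc}, for $x \le y$ the equality $\cc(x,y) = \ad(x,y)$ holds iff either $\dhat$ is unimodal on $[x,y]$ (which makes the inner arguments of the two minima coincide) or $y - x \ge \tfrac12(\dhat(x) + \dhat(y) + \tv_x^y(\dhat))$ (which cuts both minima off at $x$).

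For $(i) \Rightarrow (ii)$, I will fix $x < y$ with $\dhat'(x) < 0 < \dhat'(y)$. Since $\dhat$ strictly decreases immediately after $x$ and strictly increases immediately before $y$, the minimum $m := \min_{t \in [x,y]}\dhat(t)$ is attained in the open interval and satisfies $m < \min\{\dhat(x), \dhat(y)\}$. Bounding the total variation by a descent from $\dhat(x)$ to $m$ followed by an ascent to $\dhat(y)$ gives $\tv_x^y(\dhat) \ge (\dhat(x) - m) + (\dhat(y) - m)$, whence
\[
    \tfrac12\bigl(\dhat(x) + \dhat(y) + \tv_x^y(\dhat)\bigr) \ge \dhat(x) + \dhat(y) - m > \max\{\dhat(x), \dhat(y)\} \ge \max_{t \in [x,y]}\dhat(t).
\]
The inner argument of $\cc(x,y)$ is then strictly smaller than that of $\ad(x,y)$, so the equality $\cc(x,y) = \ad(x,y)$ forces both minima to equal $x$; rearranging $y - \tfrac12(\dhat(x) + \dhat(y) + \tv_x^y(\dhat)) \ge x$ yields $y - x \ge \tfrac12(\dhat(x) + \dhat(y) + \tv_x^y(\dhat)) \ge \max\{\dhat(x), \dhat(y)\}$.

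For $(ii) \Rightarrow (i)$, I will first establish a plateau lemma: under $(ii)$, at every interior local minimum $s^* \in (0,1)$ of $\dhat$, the maximal plateau $[\alpha, \beta] \ni s^*$ at level $\dhat(s^*)$ satisfies $\beta - \alpha \ge \dhat(s^*)$; in particular, strict local minima must have value $0$. To prove this, I will use that $\dhat$ is absolutely continuous and strictly exceeds $\dhat(s^*)$ on $(\alpha - \varepsilon, \alpha)$ for small $\varepsilon > 0$, so $\{t : \dhat'(t) < 0\}$ has positive measure in every left neighborhood of $\alpha$; I will pick $x_n \to \alpha^-$ with $\dhat'(x_n) < 0$, symmetrically $y_n \to \beta^+$ with $\dhat'(y_n) > 0$, apply $(ii)$, and let $n \to \infty$.

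Granted the lemma, I will verify $y - x \ge \tfrac12(\dhat(x) + \dhat(y) + \tv_x^y(\dhat))$ on every non-unimodal $[x,y]$. Assuming first that $\dhat$ has finitely many local extrema on $[x,y]$, I will label them $x = s_0 \le t_1 \le s_1 \le \cdots \le t_m \le s_m = y$ with $t_k$ local maxima, $s_j$ ($1 \le j \le m-1$) interior local minima (boundary-direction edge cases absorbed by allowing $t_1 = s_0$ or $t_m = s_m$), so $m \ge 2$ by non-unimodality; let $[\alpha_j, \beta_j] \ni s_j$ denote the corresponding plateau and set $T_k = \dhat(t_k)$, $S_j = \dhat(s_j)$. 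The 1-Lipschitz bounds $T_k - S_k \le \alpha_k - t_k$ for $1 \le k \le m-1$ and $T_k - S_{k-1} \le t_k - \beta_{k-1}$ for $2 \le k \le m$, summed and combined with $\beta_j - \alpha_j \ge S_j$, yield
\[
    T_1 + T_m + 2\sum_{k=2}^{m-1} T_k - \sum_{j=1}^{m-1} S_j \le t_m - t_1,
\]
which rearranges using $T_k \ge 0$ to $\sum_{k=1}^m T_k - \sum_{j=1}^{m-1} S_j \le t_m - t_1 \le y - x$. The standard alternating-sum identity (cf.\ the proof of Proposition~\ref{pr:bound}) identifies the left-hand side with $\tfrac12(\dhat(x) + \dhat(y) + \tv_x^y(\dhat))$, completing this case. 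The general case of infinitely many local extrema will be handled by uniform approximation of $\dhat$ by piecewise-linear 1-Lipschitz functions with finitely many extrema satisfying the plateau constraint and passing to the limit; arranging the approximation to respect $(ii)$ is the main technical obstacle.
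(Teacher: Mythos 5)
Your proposal has a genuine gap in each direction.

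In $(i)\Rightarrow(ii)$, the final step of your displayed chain, $\max\{\dhat(x),\dhat(y)\}\ge\max_{t\in[x,y]}\dhat(t)$, is false in general: under $\dhat'(x)<0<\dhat'(y)$ the function may dip after $x$, rise to an interior peak strictly above both endpoint values, dip again, and rise into $y$, so the interior maximum can exceed $\max\{\dhat(x),\dhat(y)\}$ and even your lower bound $\dhat(x)+\dhat(y)-m$ (e.g.\ $\dhat(x)=\dhat(y)=0.05$, $m=0$, interior peak of height $0.2$). Consequently your display does not establish that the inner argument of $\cc(x,y)$ is strictly smaller than that of $\ad(x,y)$, which is precisely what you need to force both minima to equal $x$. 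The strict inequality $\tfrac12\big(\dhat(x)+\dhat(y)+\tv_x^y(\dhat)\big)>\max_{t\in[x,y]}\dhat(t)$ is true, but to prove it you must route the variation through the point where the interior maximum $M$ is attained \emph{and} through a point $x_1>x$ with $\dhat(x_1)<\dhat(x)$ supplied by $\dhat'(x)<0$, giving $\tv_x^y(\dhat)\ge(\dhat(x)-\dhat(x_1))+(M-\dhat(x_1))+(M-\dhat(y))>2M-\dhat(x)-\dhat(y)$, with the boundary case $M=\dhat(x)$ handled symmetrically via $\dhat'(y)>0$; this is essentially the paper's two-case argument.

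In $(ii)\Rightarrow(i)$, your plateau lemma and the finite-extrema bookkeeping are correct and constitute a genuinely different route to the inequality $y-x\ge\tfrac12\big(\dhat(x)+\dhat(y)+\tv_x^y(\dhat)\big)$ on non-unimodal intervals. But the reduction of the general case to finitely many local extrema is left open, and the remedy you sketch (uniform approximation by piecewise linear functions) is unlikely to succeed: total variation is not continuous under uniform convergence, and Example~\ref{ex:dhat_n} of the paper shows that $\delta\mapsto\cc$ is itself discontinuous for exactly this reason; moreover an approximant of a $\dhat$ satisfying $(ii)$ need not satisfy $(ii)$. The paper sidesteps all of this by not proving the global inequality extremum-by-extremum: inside a non-unimodal configuration it locates one deepest plateau $[x_2,y_2]$, where $\tv_{x_2}^{y_2}(\dhat)=0$ and the plateau-width bound immediately yields $\cc(x_2,y_2)=x_2$, and then propagates to $(x,y)$ using only that $\cc$ is increasing and $1$-Lipschitz, namely $\cc(x,y)\ge\cc(x_2,y_2)-(x_2-x)=x$. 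Grafting this localization step onto your plateau lemma would close the gap without any approximation argument.
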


\begin{proof}
$(i) \Rightarrow (ii)$: Suppose that $\cc=\ad$. Let $x,y \in \II$ be such that $x<y$, $\dhat'(x),\dhat'(y)$ exist, and $\dhat'(x)<0<\dhat'(y)$. We need to prove that $y-x \ge \max\{\dhat(x),\dhat(y)\}$. Let $t \in [x,y]$ be the point where the function $\dhat$ attains its maximum on $[x,y]$, i.e., $\dhat(t) = \max_{s\in [x,y]}\dhat(s)$. We will consider two cases, $t>x$ and $t=x$. 

Suppose first that $t>x$. Since $\dhat'(x)<0$, there exists $x_1>x$, such that $\dhat(x_1) < \dhat(x)$ and we can choose $x_1$ so that $x_1<t$. It follows that
\begin{align*}
    \tv_x^t(\dhat) &\ge |\dhat(x)-\dhat(x_1)| + |\dhat(x_1)-\dhat(t)| \\
    &= \dhat(x)-\dhat(x_1) + \dhat(t)-\dhat(x_1) \\
    &> \dhat(t)-\dhat(x),
\end{align*}
thus
\begin{align*}
    f^\delta(x,t) &= t - \frac12 \left(\dhat(x)+ \dhat(t) + \tv_x^t (\dhat) \right) \\
    &< t - \frac12 \left(\dhat(x)+ \dhat(t) +  \dhat(t)-\dhat(x) \right) \\
    &= t - \dhat(t).
\end{align*}
Now, 
$$ \cc(x,t) = \ad(x,t) = \min\{x, t - \max_{s\in [x,t]}\dhat(s)\} = \min\{x, t - \dhat(t)\}.$$
On the other hand, $\cc(x,t) = \min\{x, f^\delta(x,t)\}$. From $\min\{x, t - \dhat(t)\} = \min\{x, f^\delta(x,t)\}$ and $f^\delta(x,t) <  t - \dhat(t)$ it follows that $x=f^\delta(x,t)$, hence $\cc(x,t) = \ad(x,t) = x$. This implies $x < t - \dhat(t)$. Finally, we can estimate
$$ y-x \ge t-x \ge \dhat(t) \ge \max\{\dhat(x),\dhat(y)\}.$$

Suppose now that $t=x$. Since $\dhat'(y)>0$, there exists $y_1<y$, such that $\dhat(y_1) < \dhat(y)$ and we can choose $y_1$ so that $x<y_1$. Similarly as above we obtain 
$$ \tv_x^y(\dhat) > \dhat(x)-\dhat(y),$$
and
$$f^\delta(x,y) < y - \dhat(x).$$
Since
$$ \cc(x,y) = \ad(x,y) = \min\{x, y - \dhat(x)\}$$
it follows that $\cc(x,y) = \ad(x,y) = x$, thus $x \le y - \dhat(x)$ and finally
$$ y-x \ge \dhat(x) = \max\{\dhat(x),\dhat(y)\}.$$
   
$(ii) \Rightarrow (i)$: Let $(x,y) \in \II^2$. We need to prove that $\cc(x,y)=\ad(x,y)$. This clearly holds when $x=y$. Since $\cc$ and $\ad$ are both symmetric, we may thus assume that $x<y$. We consider two cases.

Suppose first that there are no points $x\le x_1 <t<y_1 \le y$ such that $\dhat(t)< \min\{ \dhat(x_1),\dhat(y_1)\}$. It follows that there exists $t_0 \in [x,y]$ such that $\dhat$ is increasing on $[x,t_0]$ (when $t_0>x$) and decreasing on $[t_0,y]$ (when $t_0<y$).
Hence, $\tv_x^y(\dhat)=2\dhat(t_0)-\dhat(x)-\dhat(y)$ and
\begin{align*}
    \cc(x,y) &=\min \big\{ x,y - \frac12 \big(\dhat(x)+ \dhat(y) + \tv_x^y (\dhat) \big) \big\}=
    \min \{ x,y - \dhat(t_0) \}\\
    &= \min \{ x,y - \max_{t \in [x,y]} \dhat(t)  \}=\ad(x,y).
\end{align*}

Now suppose there exist $x\le x_1 <t<y_1 \le y$ such that $\dhat(t)< \min\{ \dhat(x_1),\dhat(y_1)\}$.
Since $\dhat$ is continuous it has a global minimum on $[x_1,y_1]$. Let this global minimum be attained at $t_1$. By assumption $x_1<t_1<y_1$.
Let $[x_2,y_2] \subseteq [x_1,y_1]$ be the maximal closed interval such that $\dhat(t)=\dhat(t_1)$ for all $t \in [x_2,y_2]$.
It exists since $\delta$ is continuous. Clearly, $[x_2,y_2] \subseteq (x_1,y_1)$.

Fix some $\varepsilon >0$ such that $x_2-\varepsilon,y_2+\varepsilon \in [x_1,y_1]$.
We claim that there exists $0<\varepsilon_x<\varepsilon$ such that $\dhat'(x_2-\varepsilon_x)<0$. Suppose this is not the case. Then $\dhat'(t) \ge 0$ whenever $t \in (x_2-\varepsilon,x_2)$ and the derivative exists.
Equation~\eqref{eq:tvint} implies
$$0\le \tv_{x_2-\varepsilon}^{x_2}(\dhat)=\int_{x_2-\varepsilon}^{x_2} \dhat'(t)~dt=\dhat(x_2)-\dhat(x_2-\varepsilon)\le 0,$$
so that function $\dhat$ is constant on $[x_2-\varepsilon,x_2]$.
This contradicts the maximality of the interval $[x_2,y_2]$, which proves our claim.
A similar argument shows that there exists $0<\varepsilon_y<\varepsilon$ such that $\dhat'(y_2+\varepsilon_y)>0$.
The assumption in item $(ii)$ for the points $x_2-\varepsilon_x$ and $y_2+\varepsilon_y$ now implies
$$(y_2+\varepsilon_y)-(x_2-\varepsilon_x) \ge \max\{\dhat(y_2+\varepsilon_y),\dhat(x_2-\varepsilon_x)\} \ge \dhat(t_1),$$
hence, $y_2-x_2 \ge \dhat(t_1)-\varepsilon_x-\varepsilon_y >\dhat(t_1)-2\varepsilon$.
By sending $\varepsilon$ to $0$ we obtain $y_2-x_2 \ge \dhat(t_1)$ or equivalently $y_2 -\dhat(t_1) \ge x_2$.
Consequently, using Theorem~\ref{th:cc}, we obtain
\begin{align*}
\cc(x_2,y_2) &=\min \big\{ x_2,y_2 - \frac12 \big(\dhat(x_2)+ \dhat(y_2) + \tv_{x_2}^{y_2} (\dhat) \big) \big\}\\
&=\min \big\{ x_2,y_2 - \frac12 \big(\dhat(t_1)+ \dhat(t_1) + 0 \big) \big\}\\
&=\min \big\{ x_2,y_2 - \dhat(t_1) \big\}=x_2.
\end{align*}
Since $\cc$ is increasing in each variable and $1$-Lipschitz, we infer
$$\cc(x,y) \ge \cc(x,y_2) \ge \cc(x_2,y_2)-(x_2-x)=x.$$
This automatically implies that $\ad(x,y)=\cc(x,y)=x$.
\end{proof}

Condition $(ii)$ is clearly satisfied if the diagonal $\delta$ is simple (cf. \cite[Theorem~32]{NeQuRoUb}).
If the diagonal $\delta$ is not simple, there exist $x,y \in \II$ with $x<y$, such that $\dhat'(x),\dhat'(y)$ exist, and $\dhat'(x)<0<\dhat'(y)$. The condition $(ii)$ requires that any such $x$ and $y$ are far apart enough
relative to the values of $\dhat$.
More precisely, condition $\dhat'(x)<0<\dhat'(y)$ ensures that the global minimum of $\dhat$ on the interval $[x,y]$ is attained at some point $t_1 \in (x,y)$, and the proof of $(ii) \Rightarrow (i)$ shows that there exists a subinterval $[x_2,y_2] \subset (x,y)$, containing $t_1$, such that $\dhat$ is constant on $[x_2,y_2]$ and $y_2-x_2 \ge \dhat(x_2)$. So either $x_2<y_2$ and the graph of $\dhat$ has a flat section whose width is at least as large as its height, or $x_2=y_2$ and $\dhat$ has a zero on $(x,y)$.
In the latter case there exists $t \in (x,y)$ such that $\delta(t)=t$, so any copula $C$ with $\delta_C = \delta$ is an ordinal sum, see \cite[Theorem~3.2.1]{Ne99}.
Example of a diagonal $\delta$ that satisfies respectively violates condition $(ii)$ is depicted in Figure~\ref{fig:CequalsA}. \medskip

\begin{figure}
    \centering
    \includegraphics[align=c,width=0.4\textwidth]{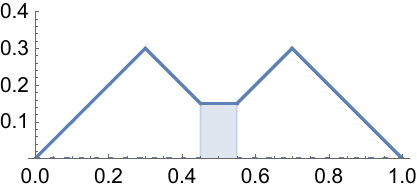}
    \hspace{1cm}
    \includegraphics[align=c,width=0.4\textwidth]{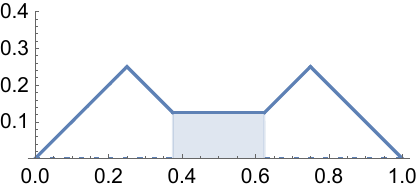}
    \caption{Upright versus lying shaded rectangle representing the condition $(ii)$ of Theorem \ref{th:CequalA} - in the first case we have $\cc \neq \ad$, while in the second we have $\cc=\ad$.}
    \label{fig:CequalsA}
\end{figure}

Next example shows that the dependence of the upper bound quasi-copula $\cc$ on the diagonal $\delta$ is not continuous. This is not surprising, since the formula contains total variation.

\begin{example} \label{ex:dhat_n}
Let $\delta$ be a diagonal section defined by
$$
\delta(x)=
\begin{cases}
 0; & 0 \le x\le \frac13, \\
 x-\frac13; & \frac13\le x\le \frac23, \\
 2x-1; & \frac23\le x \le 1. \\
\end{cases}
$$  
For every $n \in \NN$ we perturb $\delta$ into diagonal section $\delta_n = \delta - \varphi_n$, where $\varphi_n: \II \to \II$ is a zigzag function. More precisely, $\varphi_n$ is piecewise linear function with support equal to the interval $[\frac13,\frac23]$, for every $x \in [\frac13,\frac23]$ for which $\varphi'_n(x)$ exists we have $\varphi'_n(x) = \pm 1$, and $\varphi_n(x) \le \frac{1}{3n}$ for all $x \in \II$. It follows that the sequence of diagonal sections $\delta_n$ converges to $\delta$ in supremum norm. Since $\delta$ is a simple diagonal, we have $\cc = \ad$ by Theorem  \ref{th:CequalA}. Furthermore, $\delta'_n(x) \in \{0, 2\}$ whenever it exists, so $\overline{C}_{\delta_n} = K_{\delta_n}$ by Theorem  \ref{th:CequalK}. Now,
$$\lim_{n\to\infty}\overline{C}_{\delta_n} = \lim_{n\to\infty}K_{\delta_n} = \kd \ne \ad = \cc.$$
Figure \ref{fig:ex3.8} shows the graph of function $\dhat$, an example of perturbed $\dhat_n$, and graphs of quasi-copulas $\overline{C}_{\delta_n}$, $\cc=\ad$, and $\displaystyle\lim_{n\to\infty}\overline{C}_{\delta_n} = \kd$.
\end{example}

\begin{figure}
    \centering
    \includegraphics[align=c,width=0.4\textwidth]{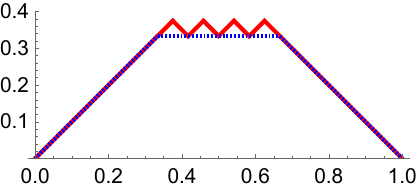}
    \includegraphics[align=c,width=0.35\textwidth]{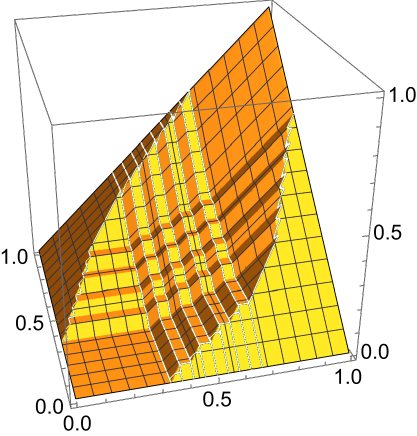} \\
    \includegraphics[align=c,width=0.35\textwidth]{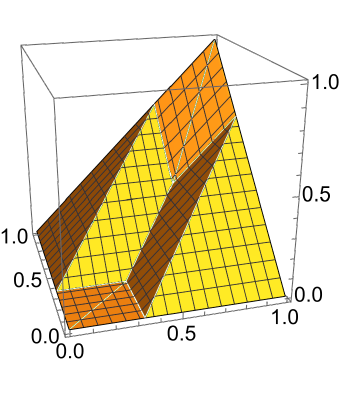}
    \includegraphics[align=c,width=0.35\textwidth]{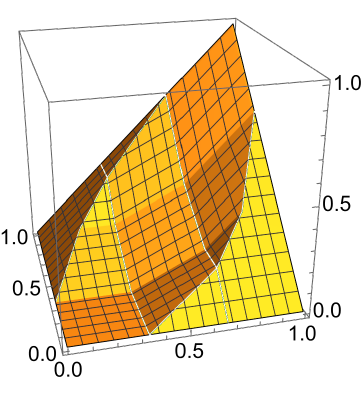}
      \caption{The graph of $\dhat$ in dotted line and an example of $\dhat_n$ in full line  (upper left), quasi-copula $\overline{C}_{\delta_n}$ (upper right), quasi-copula $\cc=\ad$ (lower left), and copula $\kd$ (lower right) from Example \ref{ex:dhat_n}.}
    \label{fig:ex3.8}
\end{figure}

\section{Maximal asymmetry of copulas with a given diagonal section}\label{sec:asymmetry}

In this section we apply our main results to the study of the asymmetry of copulas. We are interested in how large the asymmetry of a copula can be, if we know its diagonal section.

\begin{definition}
    For a given diagonal section $\delta$ we define
    $$\mu_\delta=\sup\{\mu(C)\colon C \in \mathcal{C}, \delta_C=\delta \},$$
    i.e., the maximal asymmetry of all copulas with diagonal section $\delta$. 
\end{definition}
We remark that the supremum is attained, see Corollary~\ref{co:asy}.
In order to understand the results of this section, we need from Section~\ref{sec:cop} the function $f^\delta$ given by equation \eqref{eq:f} and its properties, the copula $\ud$ from Theorem~\ref{th:copula_U}, and the function $g^\delta_U$ and its properties from Lemma \ref{lem:g_U}.

\medskip We recall the definition of the diagonal splice of two quasi-copulas with the same diagonal section, which is always a quasi-copula, see \cite{NeQuRoUb}.

\begin{definition} \label{def:4.2}
    Let $Q_1, Q_2 \in \mathcal{Q}$ with $Q_1(x,x)=Q_2(x,x)=\delta(x)$ for all $x\in \II$. For $x,y\in \II$ define
    $$(Q_1 \boxslash Q_2) (x,y)=
    \begin{cases}
        Q_1(x,y); & x\le y, \\ Q_2(x,y); & x\ge y.
    \end{cases}$$
\end{definition}

Next proposition shows that the diagonal splice of quasi-copula $\cc$ and Bertino copula $\bd$ is always a copula. 

\begin{proposition}
For a given diagonal section $\delta$ the function $\cc \boxslash B_\delta$ is a copula.
\end{proposition}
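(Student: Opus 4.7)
The plan is to identify the two pieces of the splice, reduce an arbitrary rectangle to a disjoint union of sub-rectangles that are either off-diagonal or diagonal squares, and then handle the diagonal squares by a direct computation. By Theorem~\ref{th:cc}, on the upper-left triangle $\{x\le y\}$ the quasi-copula $\cc$ coincides with the copula $\ud$, so $H := \cc \boxslash B_\delta$ satisfies $H(x,y) = \ud(x,y)$ for $x \le y$ and $H(x,y) = B_\delta(x,y)$ for $x \ge y$, with the two formulas agreeing on the main diagonal (where both equal $\delta(x)$). Since the diagonal splice of two quasi-copulas with the same diagonal section is again a quasi-copula, $H$ is already grounded, has uniform marginals, is $1$-Lipschitz and increasing in each variable; it therefore suffices to prove $\vol_H(R) \ge 0$ for every rectangle $R = [a,b] \times [c,d] \subseteq \II^2$.

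Using the standard additivity of $\vol_H$ under refinements by horizontal and vertical cuts, I would insert cuts at $x,y \in \{\max(a,c),\,\min(b,d)\}$ whenever these values lie strictly inside $[a,b]$ or $[c,d]$. A short case analysis on the relative positions of $a,b,c,d$ then shows that every resulting sub-rectangle falls into exactly one of three categories: a rectangle contained in $\{x \le y\}$ (where $H = \ud$), a rectangle contained in $\{x \ge y\}$ (where $H = B_\delta$), or a square $[p,q]^2$ straddling the diagonal. Sub-rectangles of the first two types contribute non-negative $H$-volumes because $\ud$ and $B_\delta$ are copulas, so the whole task reduces to proving $\vol_H([p,q]^2) \ge 0$ for every $0 \le p \le q \le 1$.

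For a diagonal square, $\vol_H([p,q]^2) = \delta(p) + \delta(q) - \cc(p,q) - B_\delta(p,q)$. Substituting $\delta(t) = t - \dhat(t)$, the Bertino formula $B_\delta(p,q) = p - m$ with $m := \min_{t \in [p,q]} \dhat(t)$ from Theorem~\ref{th:ABK}$(ii)$, and $\cc(p,q) = \min\{p,\, q - \tfrac{1}{2}(\dhat(p) + \dhat(q) + T)\}$ with $T := \tv_p^q(\dhat)$, and then splitting according to which argument of this minimum is active, yields after simplification either $q - p - \dhat(p) - \dhat(q) + m$ (when $\cc(p,q) = p$) or $\tfrac{1}{2}(T - \dhat(p) - \dhat(q) + 2m)$ (when $\cc(p,q) < p$). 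Both expressions are non-negative once one observes the elementary inequality $T \ge \dhat(p) + \dhat(q) - 2m$, which follows from the definition of total variation applied to any partition of $[p,q]$ that contains a point at which $\dhat$ attains $m$; in the first case one additionally uses the active-argument condition $\dhat(p)+\dhat(q)+T \le 2(q-p)$. I expect this total-variation bound to be the genuinely new input of the argument; the rest is essentially bookkeeping, so no real obstacle is anticipated.
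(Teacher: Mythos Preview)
Your argument is correct. The rectangle decomposition is standard, and both case computations for the diagonal square go through: the key inequality $\tv_p^q(\dhat)\ge \dhat(p)+\dhat(q)-2m$ follows exactly as you say from the three-point partition $p,t^*,q$ with $\dhat(t^*)=m$, and in the first case the active-argument condition combined with this bound yields $q-p\ge \dhat(p)+\dhat(q)-m$ as required.

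The paper, however, reaches the same conclusion much more quickly and by a structurally different route. Instead of decomposing rectangles by hand, it invokes the criterion of \cite[Theorem~7]{NeQuRoUb}, which says that $C_1\boxslash C_2$ is a copula precisely when $C_1(x,y)+C_2(y,x)\le \delta(x)+\delta(y)$ for all $x\le y$; this packages your entire rectangle decomposition into a single citation. For the remaining inequality the paper avoids all explicit formulas for $\cc$ and $\bd$: since $\bd$ is the pointwise infimum of copulas with diagonal $\delta$ and $\ud$ is one such copula, $\bd(y,x)\le \ud(y,x)$, whence
\[
\delta(x)+\delta(y)-\ud(x,y)-\bd(y,x)\ \ge\ \delta(x)+\delta(y)-\ud(x,y)-\ud(y,x)=\vol_{\ud}([x,y]^2)\ge 0.
\]
So where you compute with $\dhat$, $m$ and $T$, the paper simply recognizes the $\ud$-volume of a square. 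Your approach is more self-contained (it does not rely on the external splice criterion) and makes the total-variation mechanism explicit; the paper's approach is shorter and conceptually cleaner, trading your computation for the single observation $\bd\le \ud$.
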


\begin{proof}
Since $\cc \boxslash B_\delta = \ud \boxslash B_\delta$, it suffices to show that $\ud(x,y)+ \bd(y,x) \le \delta(x)+ \delta(y)$ for every $x,y\in \II$ with $x \le y$ by \cite[Theorem 7]{NeQuRoUb}. Indeed, if $x \le y$ then
\begin{align*}
\delta(x)+\delta(y)- \ud(x,y)-\bd(y,x) &\ge \delta(x)+\delta(y)- \ud(x,y)-\ud(y,x) \\
&= \vol_{\ud}([x,y] \times [x,y] ) \ge 0,
\end{align*}
because $\ud$ is a copula.
\end{proof}

In the rest of the paper we will derive various formulas for calculating the maximal asymmetry of copulas with a given diagonal section. Each formula will be progressively easier to evaluate.
In particular, we show that
\begin{itemize}
\item maximal asymmetry $\mu_\delta$ is attained by copula $\cc \boxslash \bd$ (see Corollary~\ref{co:asy}),
\item the asymmetry of $\mu(\cc \boxslash \bd)$ is attained on the graph of the function $g_U^\delta$ (see Lemma~\ref{le:asy} and Theorem~\ref{th:asy}),
\item the asymmetry of $\mu(\cc \boxslash \bd)$ is attained on the intersection of the graph of $g_U^\delta$ and the extended graph of $h^\delta$ (see Proposition~\ref{pr:gdinH} and Theorem~\ref{th:asy1}),
\item for a simple diagonal we find a specific point $x_0 \in \II$ such that $\mu_\delta=\dhat(x_0)$ (see Proposition~\ref{pr:simple}).
\end{itemize}

\begin{corollary} \label{co:asy}
For a given diagonal section $\delta$ we have $\mu_\delta=\mu(\cc \boxslash B_\delta)$.
\end{corollary}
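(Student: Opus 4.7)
The plan is to prove the claim by establishing both inequalities $\mu(\cc \boxslash \bd) \le \mu_\delta$ and $\mu_\delta \le \mu(\cc \boxslash \bd)$; combined with the preceding proposition this will also show that the supremum in the definition of $\mu_\delta$ is attained. The first inequality is immediate: the preceding proposition tells us that $\cc \boxslash \bd$ is a copula, and it has diagonal section $\delta$ by construction (both $\cc$ and $\bd$ do), so it is among the copulas over which the supremum defining $\mu_\delta$ is taken.

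For the reverse inequality, I would fix an arbitrary copula $C$ with $\delta_C=\delta$ and a point $(x,y)\in \II^2$, and bound $|C(x,y)-C(y,x)|$ by $|(\cc \boxslash \bd)(x,y) - (\cc \boxslash \bd)(y,x)|$. The key observation is that both $\cc$ and $\bd$ are symmetric functions on $\II^2$, as is evident from the explicit formulas in Theorem~\ref{th:cc} and Theorem~\ref{th:ABK}$(ii)$ (this reflects the fact that they are pointwise optimal bounds on families of copulas closed under transposition). Using this symmetry together with $\bd\le C\le \cc$ pointwise, I get
\begin{align*}
C(x,y)-C(y,x) &\le \cc(x,y)-\bd(y,x), \\
C(y,x)-C(x,y) &\le \cc(y,x)-\bd(x,y) = \cc(x,y)-\bd(y,x),
\end{align*}
hence $|C(x,y)-C(y,x)|\le \cc(x,y)-\bd(y,x)$.

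To finish, assume without loss of generality that $x\le y$. Then Definition~\ref{def:4.2} gives $(\cc \boxslash \bd)(x,y)=\cc(x,y)$ and $(\cc \boxslash \bd)(y,x)=\bd(y,x)$, and since $\cc\ge \bd$ pointwise this difference is nonnegative, so $|(\cc \boxslash \bd)(x,y)-(\cc \boxslash \bd)(y,x)|=\cc(x,y)-\bd(y,x)$. Combining with the bound above yields $|C(x,y)-C(y,x)|\le |(\cc \boxslash \bd)(x,y)-(\cc \boxslash \bd)(y,x)|$; taking the maximum over $(x,y)\in \II^2$ gives $\mu(C)\le \mu(\cc \boxslash \bd)$, and taking the supremum over admissible $C$ completes the argument. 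There is no substantial technical obstacle here; the whole argument rides on the symmetry of $\cc$ and $\bd$ together with their extremality as pointwise bounds, and the neat compatibility of these two facts with the way the diagonal splice is defined.
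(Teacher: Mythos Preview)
Your proof is correct and follows essentially the same approach as the paper's own proof: both bound $|C(x,y)-C(y,x)|$ by $\cc(x,y)-\bd(y,x)$ using the pointwise extremality of $\cc$ and $\bd$, identify this quantity with $|(\cc\boxslash\bd)(x,y)-(\cc\boxslash\bd)(y,x)|$, and use that $\cc\boxslash\bd$ is itself an admissible copula. Your version is slightly more explicit about invoking the symmetry of $\cc$ and $\bd$, whereas the paper writes the bound via $\max$ and $\min$, but the substance is identical.
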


\begin{proof}
    For any copula $C$ with diagonal section $\delta$ and $0\le x \le y \le 1$ we have
    \begin{align*}
        |C(x,y)-C(y,x)| &= \max\{C(x,y),C(y,x)\}-\min\{C(x,y),C(y,x)\}\\
        &\le \cc(x,y)-\bd(y,x)\le \mu(\cc \boxslash B_\delta).
    \end{align*}
    Hence, $\mu(C) \le \mu(\cc \boxslash B_\delta)$. Since $\cc \boxslash B_\delta$ is a copula with diagonal section $\delta$, we conclude that $\mu_\delta= \mu(\cc \boxslash B_\delta)$.
\end{proof}

Now we look at the asymmetry of copula $\cc \boxslash B_\delta$.
We consider a vertical section $\{x_0\} \times [x_0,1]$, and show that the maximal asymmetry on this section is attained on its intersection with the graph of  $g_U^\delta$. Hence, $\mu(\cc \boxslash B_\delta)$ is attained on the graph of $g_U^\delta$.

\begin{lemma} \label{le:asy}
   For a given diagonal section $\delta$ and any $x_0\in \II$ we have 
   $$ \max_{y\in[x_0,1]}(\cc(x_0,y)-\bd(x_0,y)) = \cc(x_0,g^\delta_U(x_0))-\bd(x_0,g^\delta_U(x_0)) = \min_{t\in[x_0,g^\delta_U(x_0)]}\dhat(t).$$
\end{lemma}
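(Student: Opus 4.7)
Write $y^\ast = g^\delta_U(x_0)$ and $G(y) = \cc(x_0,y) - \bd(x_0,y)$ for $y \in [x_0,1]$. The plan is to handle the value at $y^\ast$ first, then show $G$ is unimodal in $y$ with maximum at $y^\ast$.

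For the second equality, I would note that $(x_0,y^\ast) \in D_f(\delta) \cap D_x(\delta)$ by definition of $g_U^\delta$ and Lemma~\ref{lem:g_U}$(iii)$, so $f^\delta(x_0,y^\ast) = x_0$; hence $\cc(x_0,y^\ast) = \ud(x_0,y^\ast) = x_0$ by Theorem~\ref{th:copula_U}. Combined with $\bd(x_0,y^\ast) = x_0 - \min_{t\in[x_0,y^\ast]}\dhat(t)$ from Theorem~\ref{th:ABK}$(ii)$, this gives $G(y^\ast) = \min_{t\in[x_0,y^\ast]}\dhat(t)$.

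For the first equality, I would split into $y \ge y^\ast$ and $y \in [x_0,y^\ast]$. The range $y \ge y^\ast$ is easy: by Lemma~\ref{lem:g_U} we have $(x_0,y) \in D_x(\delta)$, so $\ud(x_0,y) = x_0$, hence $\cc(x_0,y) = x_0$, and $G(y) = \min_{t \in [x_0,y]}\dhat(t)$ is clearly non-increasing in $y$. For the range $y \in [x_0,y^\ast]$ the triangle $\Delta_{(x_0,y^\ast)} \subseteq D_f(\delta)$ by Lemma~\ref{lem:g_U}$(i)$, so $\cc(x_0,y) = f^\delta(x_0,y)$. Writing $m(y) = \min_{t\in[x_0,y]}\dhat(t)$ and using the separable form $f^\delta(x_0,y) = f_1^\delta(x_0) + f_2^\delta(y)$, it suffices to prove $f_2^\delta(y) + m(y)$ is non-decreasing on $[x_0,y^\ast]$.

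The core calculation is then the following: for $y_1 \le y_2$ in $[x_0,y^\ast]$, the difference $f_2^\delta(y_2) + m(y_2) - f_2^\delta(y_1) - m(y_1)$ equals
$$(y_2-y_1) - \tfrac12(\dhat(y_2)-\dhat(y_1)) - \tfrac12\tv_{y_1}^{y_2}(\dhat) + m(y_2) - m(y_1).$$
I expect the main technical obstacle to lie here: the term $m(y_2) - m(y_1)$ is non-positive. I would split into two cases based on whether the minimizer of $\dhat$ on $[x_0,y_2]$ lies in $[x_0,y_1]$ or in $(y_1,y_2]$. In the first case $m(y_2) = m(y_1)$ and the remaining terms are non-negative by the $1$-Lipschitz property of $\dhat$ (which bounds both $\dhat(y_2)-\dhat(y_1)$ and $\tv_{y_1}^{y_2}(\dhat)$ by $y_2-y_1$). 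In the second case, with $t^\ast \in (y_1,y_2]$ the new minimizer, the key move is to estimate $\tv_{y_1}^{y_2}(\dhat) \ge (\dhat(y_1) - \dhat(t^\ast)) + (\dhat(y_2) - \dhat(t^\ast))$ and combine with the $1$-Lipschitz bound $\dhat(y_2) - \dhat(t^\ast) \le y_2 - t^\ast$ together with $m(y_2) \ge m(y_1) - (\dhat(y_1) - \dhat(t^\ast))$ to obtain the required non-negativity; a short algebraic rearrangement reduces the inequality to $(y_2 - t^\ast) + (t^\ast - y_1) \ge y_2 - y_1$. This handles the delicate step where the minimum of $\dhat$ jumps to a new location, and once it is established the lemma follows.
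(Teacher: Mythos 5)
Your proposal follows essentially the same route as the paper's proof: the same reduction to monotonicity of $\cc(x_0,\cdot)-\bd(x_0,\cdot)$ on $[x_0,g^\delta_U(x_0)]$ and on $[g^\delta_U(x_0),1]$ via the sets $D_f(\delta)$, $D_x(\delta)$, the same separable form of $f^\delta$, and the same two-case Lipschitz estimate depending on where the minimizer of $\dhat$ sits. One small correction to your case 2: the total variation enters the difference with a negative sign, so the lower bound $\tv_{y_1}^{y_2}(\dhat)\ge(\dhat(y_1)-\dhat(t^\ast))+(\dhat(y_2)-\dhat(t^\ast))$ points the wrong way and is not what you need; the estimate that actually yields your stated final reduction $(y_2-t^\ast)+(t^\ast-y_1)\ge y_2-y_1$ is the upper bound $\tv_{y_1}^{y_2}(\dhat)\le y_2-y_1$, used exactly as in your case 1, combined with the $1$-Lipschitz bounds $\dhat(y_2)-\dhat(t^\ast)\le y_2-t^\ast$ and $\dhat(y_1)-\dhat(t^\ast)\le t^\ast-y_1$. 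With that substitution the argument is complete and coincides with the paper's.
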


\begin{proof}
    The second equality follows immediately from the definition of the function $g^\delta_U$ and Theorem~\ref{th:ABK} $(ii)$. 
    To prove the first equality we define a function $k\colon [x_0,1] \to \II$ by
    $$k(y) = \cc(x_0,y)-\bd(x_0,y).$$
    It is obvious that $k(x_0)=k(1)=0$. To prove the claim it is sufficient to show that $k$ is increasing on the interval $[x_0, g^\delta_U(x_0)]$ and decreasing on the interval $[g^\delta_U(x_0), 1]$. By the definition of the function $g^\delta_U$ we have for any $y \in [x_0, g^\delta_U(x_0)]$ that $(x_0, y) \in D_f(\delta)$, 
    and for any $y \in [g^\delta_U(x_0), 1]$ we have $(x_0, y) \in D_x(\delta)$. It follows that on the interval $[g^\delta_U(x_0), 1]$ we have
    $k(y) = x_0 - \bd(x_0,y)$, which is decreasing in $y$. On the interval $[x_0, g^\delta_U(x_0)]$ we have
    $$k(y) = f^\delta(x_0, y) - \bd(x_0,y) = y - \frac12\left(\dhat(x_0)+ \dhat(y) + \tv_{x_0}^y (\dhat) \right) - x_0 + \min_{t\in[x_0,y]}\dhat(t).$$
    Suppose that $x_0\le y_1 \le y_2\le g^\delta_U(x_0)$. Let $t_1 \in [x_0,y_1]$ be the point where $\min_{t\in[x_0,y_1]}\dhat(t) = \dhat(t_1)$ and $t_2 \in [x_0,y_2]$ be the point where $\min_{t\in[x_0,y_2]}\dhat(t) = \dhat(t_2)$. We have
    $$k(y_2) - k(y_1) = y_2 - y_1 - \frac12\left(\dhat(y_2)-\dhat(y_1) + \tv_{y_1}^{y_2} (\dhat) \right) + \dhat(t_2) - \dhat(t_1).$$
    Since $\dhat$ is 1-Lipschitz, we have $\tv_{y_1}^{y_2} (\dhat) \le y_2-y_1$, so 
    \begin{align*}
     k(y_2) - k(y_1) &\ge y_2 - y_1 - \frac12\left(\dhat(y_2)-\dhat(y_1) + y_2 - y_1 \right) + \dhat(t_2) - \dhat(t_1)   \\
     &= \frac12\left((y_2 - y_1) - \big(\dhat(y_2) - \dhat(y_1)\big) \right) + \dhat(t_2) - \dhat(t_1).
    \end{align*}
    If $\dhat(t_2) = \dhat(t_1)$, it follows that $k(y_2) - k(y_1) \ge 0$, since $\dhat$ is 1-Lipschitz. If $\dhat(t_2) < \dhat(t_1)$, then $y_1 < t_2$, and since $\dhat(t_1) \le \dhat(y_1)$ we obtain 
    \begin{align*}
     k(y_2) - k(y_1) &\ge \frac12\left(y_2 - y_1 - \dhat(y_2) + \dhat(y_1) \right) + \dhat(t_2) - \dhat(y_1)   \\
     &= \frac12\left(y_2 - t_2 - (\dhat(y_2) - \dhat(t_2)) + t_2 - y_1 - (\dhat(y_1) - \dhat(t_2)) \right) \ge 0
    \end{align*}
    since the function $\dhat$ is 1-Lipschitz. The conclusion follows.
\end{proof}

The following theorem is a direct consequence of Lemma~\ref{le:asy} and Corollary~\ref{co:asy}.

\begin{theorem}\label{th:asy}
       For a given diagonal section $\delta$  we have 
   $$ \mu_\delta = \max_{x\in\II} \min_{t\in[x,g^\delta_U(x)]}\dhat(t).$$
\end{theorem}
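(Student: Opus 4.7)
The plan is to chain together the two immediately preceding results. By Corollary~\ref{co:asy} we already have $\mu_\delta=\mu(\cc\boxslash \bd)$, so the task reduces to computing $\mu(\cc\boxslash \bd)$ and matching it against the claimed formula $\max_{x\in\II}\min_{t\in[x,g^\delta_U(x)]}\dhat(t)$. I would start by restricting attention to pairs $(x,y)$ with $x\le y$, since the quantity $|(\cc\boxslash\bd)(x,y)-(\cc\boxslash\bd)(y,x)|$ is symmetric under swapping $x$ and $y$.

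Next I would unwind the splice. For $x\le y$, Definition~\ref{def:4.2} gives $(\cc\boxslash\bd)(x,y)=\cc(x,y)$ and $(\cc\boxslash\bd)(y,x)=\bd(y,x)$. Both $\cc$ (by the formula in Theorem~\ref{th:cc}) and $\bd$ (by Theorem~\ref{th:ABK}~$(ii)$) are symmetric in their arguments, so $\bd(y,x)=\bd(x,y)$. Moreover $\cc\ge \bd$ since the former is a sup and the latter an inf over the same set of quasi-copulas. Consequently
\begin{equation*}
\mu(\cc\boxslash\bd)\;=\;\max_{0\le x\le y\le 1}\bigl(\cc(x,y)-\bd(x,y)\bigr)\;=\;\max_{x_0\in\II}\;\max_{y\in[x_0,1]}\bigl(\cc(x_0,y)-\bd(x_0,y)\bigr).
\end{equation*}

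Now I would apply Lemma~\ref{le:asy}, which evaluates the inner maximum explicitly as $\min_{t\in[x_0,g^\delta_U(x_0)]}\dhat(t)$. Substituting this into the display above yields
\begin{equation*}
\mu_\delta\;=\;\mu(\cc\boxslash\bd)\;=\;\max_{x_0\in\II}\min_{t\in[x_0,g^\delta_U(x_0)]}\dhat(t),
\end{equation*}
which is exactly the claim. The only conceptual points to verify are the symmetry of $\cc$ and $\bd$ and the pointwise inequality $\cc\ge\bd$; the rest is assembly of prior results. There is no real obstacle here, since the heavy lifting has already been done in Lemma~\ref{le:asy} (reducing the vertical-section maximum to a min over $\dhat$) and in Corollary~\ref{co:asy} (identifying the maximiser copula as $\cc\boxslash\bd$); what remains is to note that, with $\bd$ symmetric and $\cc\ge\bd$, the supremum defining $\mu(\cc\boxslash\bd)$ reduces to a double max that is immediately handled by Lemma~\ref{le:asy}.
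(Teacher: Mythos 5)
Your proposal is correct and follows essentially the same route as the paper: reduce to $\mu(\cc\boxslash\bd)$ via Corollary~\ref{co:asy}, rewrite the asymmetry as $\max_{x\le y}(\cc(x,y)-\bd(x,y))$ using symmetry of $\cc$ and $\bd$, and then apply Lemma~\ref{le:asy} to the inner maximum. The extra checks you flag (symmetry and $\cc\ge\bd$) are routine and correctly handled.
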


\begin{proof}
By Corollary~\ref{co:asy} we have $\mu_\delta=\mu(\cc \boxslash \bd)$.
Hence,
$$\mu_\delta= \max_{\substack{(x,y) \in \II^2\\ x \le y}} (\cc(x,y)-\bd(x,y))=
\max_{x \in \II} \max_{y \in [x,1]} (\cc(x,y)-\bd(x,y))=\max_{x\in\II} \min_{t\in[x,g^\delta_U(x)]}\dhat(t)$$
by Lemma~\ref{le:asy}.
\end{proof}

Next, we want to show that the asymmetry of $\cc \boxslash B_\delta$ is attained also on the (extended) graph of the function $h^\delta$, defined in \eqref{eq:h_delta}, corresponding to the support of Bertino copula $\bd$. Denote by $Z_\delta \subseteq \II$ the set of discontinuities of $h^\delta$. We define the set
$$ H_\delta = \{(x,h^\delta(x)) \in \II^2\colon x\in\II\} \cup \bigcup_{x\in Z_\delta } 
\big(\{x\}\times[\liminf_{t\to x}h^\delta(t), h^\delta(x)]\big),$$
i.e., the graph of the function $h^\delta$ with added vertical sections where it is discontinuous. 

\begin{proposition} \label{pr:gdinH}
Let $\delta$ be a diagonal section. If $(x,g^\delta_U(x))\in H_\delta$ for some point $x \in \II$, then $\displaystyle\min_{t\in[x,g^\delta_U(x)]}\dhat(t) = \dhat(x)$. There exist a point $x_0\in \II$, such that 
$$(x_0,g^\delta_U(x_0))\in H_\delta \quad\text { and } \quad \mu_\delta = (\cc \boxslash B_\delta)(x_0,g^\delta_U(x_0)) = \dhat(x_0).$$
\end{proposition}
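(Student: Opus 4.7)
The statement has two parts. For the first assertion, suppose $(x,g^\delta_U(x))\in H_\delta$. By the definition of $H_\delta$ either $g^\delta_U(x)=h^\delta(x)$, or $x\in Z_\delta$ and $g^\delta_U(x)\in [\liminf_{t\to x} h^\delta(t),\,h^\delta(x)]$; in both cases $g^\delta_U(x)\le h^\delta(x)$. The defining property \eqref{eq:h_delta} of $h^\delta$ gives $\dhat(t)\ge \dhat(x)$ for every $t\in [x,h^\delta(x)]$, hence in particular on $[x,g^\delta_U(x)]$. Since $x$ lies in this subinterval, the minimum equals $\dhat(x)$.

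For the existence claim, I first observe that the maximum in Theorem~\ref{th:asy} is attained: by Corollary~\ref{co:asy} we have $\mu_\delta=\mu(\cc\boxslash \bd)$, and the asymmetry of the copula $\cc\boxslash \bd$ is attained on the compact square $\II^2$ by continuity; Lemma~\ref{le:asy} then identifies the $x$-coordinate of the attainer as a maximizer $\tilde x$ of $M(x):=\min_{t\in [x,g^\delta_U(x)]}\dhat(t)$. Pick $t^*\in [\tilde x,g^\delta_U(\tilde x)]$ with $\dhat(t^*)=\mu_\delta$, which exists by continuity of $\dhat$. Since $\dhat\ge \dhat(t^*)$ throughout $[t^*,g^\delta_U(\tilde x)]$, the definition of $h^\delta$ gives $h^\delta(t^*)\ge g^\delta_U(\tilde x)\ge t^*$.

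The core step is to produce $x_0\in [t^*,h^\delta(t^*)]$ with $\dhat(x_0)=\mu_\delta$ and $(x_0,g^\delta_U(x_0))\in H_\delta$. The plan is an intermediate-value-type sweep: $g^\delta_U$ is increasing and right-continuous (Lemma~\ref{lem:g_U}(ii),(iv)), while $h^\delta$ is upper-semicontinuous with the piecewise decreasing/constant/identity structure recorded after Theorem~\ref{th:ABK}. As $x$ traverses a suitable sub-segment of $[t^*,h^\delta(t^*)]$, the curves $x\mapsto g^\delta_U(x)$ and $x\mapsto h^\delta(x)$ must meet, either at a continuity point of $h^\delta$ with $g^\delta_U(x_0)=h^\delta(x_0)$ (so $(x_0,g^\delta_U(x_0))$ sits on the graph of $h^\delta$), or at a discontinuity $x_0\in Z_\delta$ whose attached vertical segment in $H_\delta$ absorbs $(x_0,g^\delta_U(x_0))$. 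Along the way $\dhat\ge \mu_\delta$, so $M(x_0)\ge \mu_\delta$; combined with $M(x_0)\le \mu_\delta$ from the global maximum and with the first claim giving $M(x_0)=\dhat(x_0)$, we conclude $\dhat(x_0)=\mu_\delta$. Lemma~\ref{le:asy} then identifies $(\cc\boxslash \bd)(x_0,g^\delta_U(x_0))$ with $M(x_0)=\mu_\delta$. The main obstacle is making the crossing argument rigorous in the presence of simultaneous jumps of $g^\delta_U$ and $h^\delta$; this requires careful matching with the discontinuity analysis in Lemma~\ref{lem:g_U} and with the structural description of the support of the Bertino copula underlying $h^\delta$ in Theorem~\ref{th:Bertino}.
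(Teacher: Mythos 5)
Your first claim is proved correctly and by the same argument as the paper: $(x,g^\delta_U(x))\in H_\delta$ forces $g^\delta_U(x)\le h^\delta(x)$, and \eqref{eq:h_delta} gives $\dhat\ge\dhat(x)$ on $[x,g^\delta_U(x)]$. The closing logic of your second part is also sound: \emph{if} you can produce $x_0$ with $(x_0,g^\delta_U(x_0))\in H_\delta$ and $\dhat(x_0)\ge\mu_\delta$, then part one plus $M(x_0)\le\mu_\delta$ yields $\dhat(x_0)=\mu_\delta$, and Lemma~\ref{le:asy} finishes.

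The genuine gap is the construction of $x_0$, which you explicitly defer (``the main obstacle is making the crossing argument rigorous\dots''). This is not a routine detail; it is the heart of the proof, and two things are missing. First, your sweep lacks a usable pair of endpoint inequalities: you derive $h^\delta(t^*)\ge g^\delta_U(\tilde x)$, but this compares the two functions at \emph{different} arguments (and $t^*$ need not itself be a maximizer of $M$, since $[t^*,g^\delta_U(t^*)]$ may extend past $g^\delta_U(\tilde x)$ into a region where $\dhat<\mu_\delta$); an intermediate-value argument needs $g^\delta_U\le h^\delta$ at a left endpoint and $g^\delta_U\ge h^\delta$ at a right endpoint of some interval of maximizers. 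The paper obtains exactly this by taking a connected component $[x_1,x_2]$ of the maximizer set of $\tau$ and perturbing outside it ($\tau(t_1)<\tau(x_1)$ for $t_1\nearrow x_1$, $\tau(t_2)<\tau(x_2)$ for $t_2\searrow x_2$) to get $g^\delta_U(x_1)\le h^\delta(x_1)$ and $h^\delta(x_2)\le g^\delta_U(x_2)$. Second, and most importantly, both curves can jump, and the vertical segments attached to $H_\delta$ only cover jumps of $h^\delta$ --- nothing a priori prevents $g^\delta_U$ from jumping clean over the value $h^\delta(x)$, in which case no crossing point exists. The paper rules this out by a specific computation: if $y_1=\lim_{t\nearrow x}g^\delta_U(t)\le h^\delta(x)<\lim_{t\searrow x}g^\delta_U(t)=y_2$, then $f^\delta(x,\cdot)\equiv x$ on $[y_1,y_2]$, which together with $\tv_{y_1}^{y_2}(\dhat)\le y_2-y_1$ forces $\dhat$ to be increasing on $[y_1,y_2]$ and contradicts $h^\delta(x)<y_2$. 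Finally, even at the meeting point one must check that $g^\delta_U(x_0)$ lands inside the segment $[\liminf_{t\to x_0}h^\delta(t),h^\delta(x_0)]$, which the paper does via $x_0=\inf\{x\colon h^\delta(x)<g^\delta_U(x)\}$, the upper-semicontinuity of $h^\delta$ and the right-continuity of $g^\delta_U$. Without these three pieces your argument does not go through.
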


\begin{proof}
Let $\tau\colon \II \to \II$ be a function defined by 
\begin{equation}\label{eq:tau}
\tau(x) = \min_{t\in[x,g^\delta_U(x)]}\dhat(t).
\end{equation}
Since $\bd$ and $\cc$ are continuous, also $\tau$ is continuous by Lemma~\ref{le:asy}. Suppose that $(x,g^\delta_U(x))\in H_\delta$ for some point $x \in \II$. Then $g^\delta_U(x) \le h^\delta(x)$. Thus for every $t\in [x, g^\delta_U(x)]$ it holds that $\dhat(t) \ge \dhat(x)$ by the definition of $\dhat(x)$. It follows that $\tau(x) = \dhat(x)$.

If $\delta=\delta_M$ then $g^\delta_U(x)=x$ and $h^\delta(x)=1$ for any $x \in \II$, so the claim is trivial. 
Suppose $\delta\ne\delta_M$, so that there exists $t\in \II$, such that $\dhat(t) = \varepsilon >0$. Since $\dhat$ is continuous, there exist points $x \in [0,t], y \in [t,1]$, such that $\dhat(x) = \dhat(y) = \frac{\varepsilon}{2}$ and $\dhat(s) \ge \frac{\varepsilon}{2}$ for any $s \in [x,y]$. It follows that $\bd(x,y) = x - \frac{\varepsilon}{2}$. Since $\varepsilon \le \tv_x^y (\dhat) \le y-x$, we have 
$$ f^\delta(x,y)=y-\frac12 \big( \dhat(x)+\dhat(y)+ \tv_{x}^{y} (\dhat)\big) \ge y-\frac12 \big( \varepsilon + y-x\big) = \frac12 \big(x+y -\varepsilon\big) \ge x,$$
so that $\cc(x,y) = x$. It follows that $\mu_\delta \ge \cc(x,y) - \bd(x,y) = \frac{\varepsilon}{2} >0$. 

Let $A$ be the set of all $x \in \II$ where $\tau$ attains its maximum $\mu_\delta$. The set $A$ is a closed subset of $\II$. Let $A'$ be its connected component. Then it is a closed interval $A'=[x_1,x_2]$, where we allow that $x_1=x_2$. Since $\mu_\delta>0$, we have $x_1,x_2 \notin \{0,1\}$. 

For any $\varepsilon >0$ there exist $t_1 \in \II \cap (x_1-\varepsilon, x_1)$ such that $\tau(t_1) <  \tau(x_1) = \mu_\delta$ and $t_2 \in \II \cap (x_2, x_2+\varepsilon)$ such that $\tau(t_2) < \tau(x_2)$. Since 
$$\tau(t_1) = \min_{s\in[t_1,g^\delta_U(t_1)]}\dhat(s) <  \tau(x_1) = \min_{s\in[x_1,g^\delta_U(x_1)]}\dhat(s)$$
and $g^\delta_U$ is increasing, it follows that there exists $t'_1 \in [t_1, x_1)$ such that $\dhat(t'_1) < \tau(x_1)$. By sending $\varepsilon$ to 0 we obtain $\tau(x_1) =\dhat(x_1)$. Therefore $\dhat(s) \ge \dhat(x_1)$ for any $s\in[x_1,g^\delta_U(x_1)]$, which implies  $h^\delta(x_1) \ge g^\delta_U(x_1)$. Furthermore, since
$$\tau(t_2) = \min_{s\in[t_2,g^\delta_U(t_2)]}\dhat(s) <  \tau(x_2) = \min_{s\in[x_2,g^\delta_U(x_2)]}\dhat(s),$$
it follows that there exists $t'_2 \in (g^\delta_U(x_2), g^\delta_U(t_2)]$ such that $\dhat(t'_2) < \tau(x_2)$. Since $\dhat(t'_2) < \tau(x_2) \le \dhat(x_2)$ it follows that $h^\delta(x_2) \le t'_2$. Since $g^\delta_U$ is right-continuous, by sending $\varepsilon$ to 0, we obtain $h^\delta(x_2) \le g^\delta_U(x_2)$.

If the component $A'$ is actually a singleton $x_0$, i.e., $x_1=x_2=x_0$, we obtain from the above $h^\delta(x_0)=g^\delta_U(x_0)$, so the point $(x_0,g^\delta_U(x_0))$ lies on the graph of $h^\delta$, thus in the set $H_\delta$.

Assume that $A'$ is not a singleton, so that $x_1<x_2$. If the function $g^\delta_U$ is continuous on $A'$, then its graph crosses $H_\delta$, since $g^\delta_U(x_1) \le h^\delta(x_1)$, $h^\delta(x_2) \le g^\delta_U(x_2)$, and $H_\delta$ is connected. So there exists $x_0 \in A'$, such that $(x_0,g^\delta_U(x_0)) \in H_\delta$.
Suppose now that the function $g^\delta_U$ has a jump at a point $x \in A'$. We will show that it is not possible that it jumps over $h^\delta(x)$. Indeed, suppose that 
$$y_1 = \lim_{t \nearrow x} g^\delta_U(t) \le h^\delta(x) < \lim_{t \searrow x} g^\delta_U(t) = g^\delta_U(x) = y_2.$$
Then the segment $\{x\}\times [y_1,y_2]$ is contained in the set $D_f(\delta) \cap D_x(\delta)$, so the continuity of the function $f^\delta$ implies $f^\delta(x,t)=x$ for all $t \in [y_1,y_2]$. In particular,
$$0=f^\delta(x,y_2)-f^\delta(x,y_1)=(y_2-y_1)-\frac12 \Big(\dhat(y_2)-\dhat(y_1)+\tv_{y_1}^{y_2}(\dhat) \Big).$$
Since $\tv_{y_1}^{y_2}(\dhat) \le y_2 - y_1$, this implies
$\dhat(y_2)-\dhat(y_1)=2(y_2-y_1)-\tv_{y_1}^{y_2}(\dhat) \ge y_2-y_1$. Since $\dhat$ is 1-Lipschits, it follows that $\dhat(y_2)-\dhat(y_1)=y_2-y_1$ and $\dhat$ is increasing on the interval $[y_1,y_2]$. Now, since $h^\delta(x) \ge y_1$, we have $\dhat(t) \ge \dhat(x)$ for all $t \in [x,y_1]$. Since $\dhat$ is increasing on the interval $[y_1,y_2]$, it follows that $\dhat(t) \ge \dhat(x)$ for all $t \in [x,y_2]$, so $h^\delta(x) \ge y_2$, a contradiction. 

We now have $g^\delta_U(x_1) \le h^\delta(x_1)$ and $h^\delta(x_2) \le g^\delta_U(x_2)$. We may assume that the latter inequality is strict, otherwise we are done by taking $x_0=x_2$.
Then the set $B=\{x \in [x_1,x_2] \colon h^\delta(x) < g^\delta_U(x)\}$ is nonempty, so we may define $x_0=\inf B$.
If $x_0=x_1$, then $g^\delta_U(x_0) \le h^\delta(x_0)$. If $x_0>x_1$ we can look at the left limit $\lim_{t \nearrow x_0} g^\delta_U(t) \le \limsup_{t \nearrow x_0} h^\delta(t) \le h^\delta(x_0)$, since $h^\delta$ is upper-semicontinuous. By the above it follows that $g^\delta_U(x_0) \le h^\delta(x_0)$ also in this case.
In particular, $x_0$ does not belong to the set $B$, so $h^\delta(t) < g^\delta_U(t)$ for some $t > x_0$ arbitrarily close to $x_0$.
It follows also that $x_0<x_2$, so we may calculate $\liminf_{t \to x_0} h^\delta(t) \le \liminf_{t \searrow x_0} h^\delta(t) \le \lim_{t \searrow x_0} g^\delta_U(t)=g^\delta_U(x_0)$.
This implies that $(x_0,g^\delta_U(x_0)) \in H_\delta$, which finishes the proof.
\end{proof}

Proposition \ref{pr:gdinH} (and its proof) provides an algorithm for computing $\mu_\delta$ for a given diagonal section~$\delta$ as follows:
\begin{enumerate}
    \item Compute the function $g^\delta_U$.
    \item Compute the function $h^\delta$.
    \item Add vertical sections to the graph of $h^\delta$ at jumps to determine $H_\delta$.
    \item Find the set of points $\Omega_\delta$ in the intersection of the graph of $g^\delta_U$ and $H_\delta$ that lie in the strict upper triangle $\{(x,y) \colon 0\le x<y \le 1\} \subseteq \II^2$.
    \item If $\Omega_\delta$ is empty, then $\mu_\delta=0$.
    \item Otherwise, find the maximal value of $\dhat(x)$ over all $x \in \II$ such that $(x,g^\delta_U(x)) \in \Omega_\delta$.
\end{enumerate}

\medskip
\noindent
A few remarks are in order:
    \begin{itemize}
        \item Note that the last step in the algorithm is valid due to the fact that $\tau(x) = \dhat(x)$ for all $(x,g^\delta_U(x)) \in \Omega_\delta$, where the function $\tau$ is defined in \eqref{eq:tau}.
        \item If a point $(x,y)$ lies on the diagonal of $\II^2$, then $|C(x,y)-C(y,x)|=0$ for any copula $C$. So if $\mu(C)>0$, then this point can be disregarded when computing $\mu(C)$. Thus, to narrow the set of candidates for computing $\mu_\delta$, we exclude diagonal points from $\Omega_\delta$ in step (4).        
        \item The case  $\delta_M(x)=x$ for all $x\in \II$ is a trivial one since  $B_{\delta_M}=M$ by definition of $\bd$. Therefore, the copula $\overline{C}_{\delta_M} \boxslash B_{\delta_M}$ equals $M$, and $\mu_{\delta_M}=0$.
        \item The set $\Omega_\delta$ is nonempty if and only if $\delta \neq \delta_M$. For any $\delta \neq \delta_M$ it holds that $\mu_\delta >0$.
    \end{itemize}

These findings are gathered in the main theorem of this section.

\begin{theorem} \label{th:asy1} 
       For a given diagonal section $\delta \ne \delta_M$  we have 
    $$\mu_\delta = \max \{\dhat(x)\colon x \in \II, (x,g^\delta_U(x)) \in H_\delta, g^\delta_U(x) > x\}.$$
\end{theorem}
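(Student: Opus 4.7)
The plan is to combine Proposition~\ref{pr:gdinH} with Theorem~\ref{th:asy}: essentially everything needed is already packaged in those two results, and the new statement merely refines the domain of the maximum by imposing the constraint $g^\delta_U(x) > x$. The proof will therefore consist of a two-way inequality together with a short topological verification.

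For the upper bound $\mu_\delta \le \max\{\dhat(x) \colon (x,g^\delta_U(x)) \in H_\delta,\ g^\delta_U(x) > x\}$, I would invoke Proposition~\ref{pr:gdinH} to produce a point $x_0 \in \II$ with $(x_0,g^\delta_U(x_0)) \in H_\delta$ and $\mu_\delta = \dhat(x_0)$. Since $\delta \ne \delta_M$, the proof of Proposition~\ref{pr:gdinH} also gives $\mu_\delta > 0$, so $\dhat(x_0) > 0$. The key step is then to verify that this $x_0$ automatically satisfies $g^\delta_U(x_0) > x_0$. I would observe that $f^\delta(x_0,x_0) = x_0 - \dhat(x_0) < x_0$, so $(x_0,x_0) \in D_f^\circ(\delta)$; continuity of $f^\delta$ makes $D_f^\circ(\delta)$ open in $\II^2$, so it contains an open neighborhood of $(x_0,x_0)$, and in particular points $(x_0,y)$ with $y > x_0$, placing $x_0$ in the admissible set.

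For the reverse inequality, I would fix any $x \in \II$ satisfying both conditions of the statement and apply the first assertion of Proposition~\ref{pr:gdinH} to obtain $\min_{t \in [x,g^\delta_U(x)]} \dhat(t) = \dhat(x)$. Theorem~\ref{th:asy} then bounds this minimum by $\mu_\delta$, yielding $\dhat(x) \le \mu_\delta$. Taking the supremum over all admissible $x$ and combining with the first direction concludes the argument; the supremum is actually a maximum because it is attained at $x_0$.

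The only genuine obstacle is the brief topological check that $g^\delta_U(x_0) > x_0$ whenever $\dhat(x_0) > 0$. This rests on the openness of $D_f^\circ(\delta)$, which is immediate once one recalls from the proof of Theorem~\ref{th:copula_U} that $f^\delta = f_1^\delta + f_2^\delta$ is continuous. With that observation in hand, the remainder is bookkeeping on top of Proposition~\ref{pr:gdinH} and Theorem~\ref{th:asy}.
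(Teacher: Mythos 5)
Your proof is correct and follows essentially the same route as the paper, which presents this theorem as a direct consequence of Proposition~\ref{pr:gdinH}, Theorem~\ref{th:asy}, and the surrounding remarks rather than giving a separate formal argument. Your openness argument showing that $\dhat(x_0)=\mu_\delta>0$ forces $(x_0,x_0)\in D_f^\circ(\delta)$ and hence $g^\delta_U(x_0)>x_0$ is a clean and valid formalization of the paper's informal remark that diagonal points may be excluded from the candidate set.
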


The following example shows how we can compute the maximal possible asymmetry of all copulas with a given diagonal section using Theorem~\ref{th:asy1}. It also demonstrates that it is not enough to look just at the intersections of graphs of $g^\delta_U$ and $h^\delta$, we need to add vertical segments to the graph of $h^\delta$.

\begin{figure}
    \centering
    \includegraphics[align=c,width=0.4\textwidth]{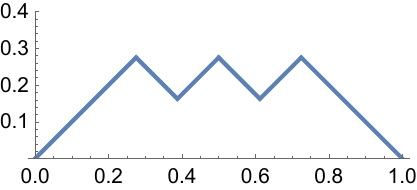}
    \hspace{1cm}
    \includegraphics[align=c,width=0.4\textwidth]{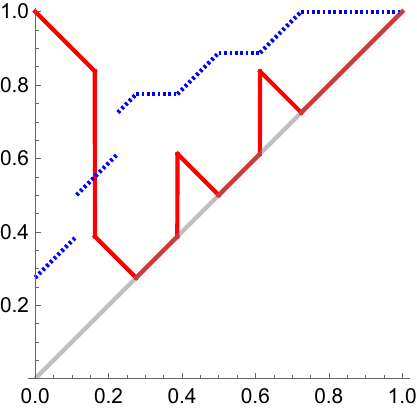}
     \caption{The graph of $\dhat$ (left), the set $H_\delta$ in full line and the graph of $g^\delta_U$ in dotted line (right) from Example \ref{ex:4.12}.}
    \label{fig:ex4.12}
\end{figure}

\begin{example} \label{ex:4.12}
Let $\delta$ be a diagonal section defined by 
$$\delta(x)=\begin{cases}
    0; & 0\le x\le \frac{11}{40},\\
    2x-\frac{11}{20}; & \frac{11}{40}\le x\le \frac{31}{80},\\
    \frac{9}{40}; & \frac{31}{80}\le x\le \frac12,\\
    2x-\frac{31}{40}; & \frac12\le x\le \frac{49}{80},\\
    \frac{9}{20}; & \frac{49}{80}\le x\le \frac{29}{40},\\
    2x-1; & \frac{29}{40} \le x\le 1.
\end{cases}$$
A lengthy calculation gives us
$$g^\delta_U(x)=\begin{cases}
    x+\frac{11}{40}; & 0\le x < \frac{9}{80},\\
    x+\frac{31}{80}; & \frac{9}{80}\le x < \frac{9}{40},\\
    x+\frac12; & \frac{9}{40}\le x < \frac{11}{40},\\
    \frac{31}{40}; & \frac{11}{40}\le x < \frac{31}{80},\\
    x+\frac{31}{80}; & \frac{31}{80}\le x < \frac12,\\
    \frac{71}{80}; & \frac12\le x < \frac{49}{80},\\
    x+\frac{11}{40}; & \frac{49}{80}\le x < \frac{29}{40},\\
    1; & \frac{29}{40} \le x\le 1, 
\end{cases} \text{\ \ \ and \ \ \ }  
h^\delta(x)=\begin{cases}
    1-x; & 0\le x \le \frac{13}{80},\\
    \frac{11}{20}-x; & \frac{13}{80} < x \le \frac{11}{40},\\
    x; & \frac{11}{40} < x < \frac{31}{80},\\
    1-x; & \frac{31}{80}\le x \le \frac12,\\
    x; & \frac12 < x < \frac{49}{80},\\
    \frac{29}{20}-x; & \frac{49}{80} \le x \le \frac{29}{40},\\
    x; & \frac{29}{40} < x\le 1.
\end{cases}$$
Apart from the graph of $h^\delta$ the set $H_\delta$ includes also vertical sections $\{\frac{13}{80}\} \times [\frac{31}{80},\frac{67}{80}], \{\frac{31}{80}\} \times [\frac{31}{80},\frac{49}{80}]$, and $\{\frac{49}{80}\} \times [\frac{49}{80},\frac{67}{80}]$. The only point in the strict upper triangle where $(x,g^\delta_U(x)) \in H_\delta$ is $x = \frac{13}{80}$, while $g^\delta_U(\frac{13}{80}) = \frac{11}{20} \ne h^\delta(\frac{13}{80}) = \frac{67}{80}$. It follows that
$\mu_\delta =\dhat(\frac{13}{80})= \frac{13}{80} = 0.1625$. Figure~\ref{fig:ex4.12} depicts the graph of the function $\dhat$ (left) and the graph of the function $g^\delta_U$ and the set $H_\delta$ (right).
\end{example}

In the special case that $\delta$ is a simple diagonal section Theorem~\ref{th:asy1} can be simplified as follows.

\begin{proposition} \label{pr:simple}
If $\delta$ is a simple diagonal section, then there is a unique point $x_0 \in \II$ such that $\dhat$ is increasing on $[0,x_0]$ and $(x_0,g^\delta_U(x_0)) \in H_\delta$. Furthermore, $\mu_\delta=\dhat(x_0)=\dhat(g^\delta_U(x_0))$.
\end{proposition}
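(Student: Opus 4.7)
The plan is to combine Theorem~\ref{th:asy} with an explicit description of $g^\delta_U$ that follows from unimodality of $\dhat$. Choose $t_0 \in \II$ so that $\dhat$ is non-decreasing on $[0,t_0]$ and non-increasing on $[t_0,1]$, with $\dhat(t_0) = \max \dhat$. For $x \le t_0 \le y$ unimodality gives $\tv_x^y(\dhat) = 2\dhat(t_0) - \dhat(x) - \dhat(y)$ and hence $f^\delta(x,y) = y - \dhat(t_0)$ via \eqref{eq:f}. Inspecting the defining conditions of $D_f(\delta)$ then yields $g^\delta_U(x) = \min\{1, x+\dhat(t_0)\}$ for $x \in [\delta(t_0), t_0]$ and $g^\delta_U(x) < t_0$ for $x < \delta(t_0)$; a symmetric computation gives $g^\delta_U(x) = \min\{1, x + \dhat(x)\}$ for $x \in [t_0, 1]$.

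Using these, on $[0, \delta(t_0))$ the minimum defining $\tau(x) = \min_{t \in [x, g^\delta_U(x)]} \dhat(t)$ is attained at $x$, so $\tau(x) = \dhat(x) \le \dhat(\delta(t_0))$. On $[\delta(t_0), t_0]$ the interval $[x, x+\dhat(t_0)]$ straddles $t_0$ and unimodality forces $\tau(x) = \min\{\dhat(x), \dhat(x + \dhat(t_0))\}$, where the first term is non-decreasing and the second non-increasing. The intermediate value theorem produces a nonempty closed interval
\[
E = \{x \in [\delta(t_0), t_0] \colon \dhat(x) = \dhat(x + \dhat(t_0))\}
\]
on which $\tau$ is constant and equal to its maximum over this piece; a short monotonicity argument shows that if $E = [a, b]$ is nondegenerate then $\dhat$ is constant both on $[a, b]$ and on $[a+\dhat(t_0), b+\dhat(t_0)]$. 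On $[t_0, 1]$ we have $\tau(x) = \dhat(x + \dhat(x))$; since $x + \dhat(x)$ is non-decreasing (as $\dhat$ is $1$-Lipschitz and non-increasing there) while $\dhat$ is non-increasing on $[t_0, 1]$, the composition is non-increasing and its max on $[t_0,1]$ is attained at $t_0$ and equals $\tau(t_0) \le \max_E \tau$.

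Set $x_0 = \max E$. Then $x_0 \le t_0$ shows $\dhat$ is increasing on $[0, x_0]$, and $\dhat(x_0) = \dhat(g^\delta_U(x_0))$ is immediate from the definition. Unimodality together with this equation forces $\dhat \ge \dhat(x_0)$ throughout $[x_0, x_0+\dhat(t_0)]$, so $g^\delta_U(x_0) \le h^\delta(x_0)$. If $E$ is a singleton, $\dhat$ decreases strictly past $x_0+\dhat(t_0)$, giving $h^\delta(x_0) = g^\delta_U(x_0)$; if $E = [a, b]$ is nondegenerate then $h^\delta$ is constant equal to $b + \dhat(t_0) = g^\delta_U(x_0)$ on $[a, b]$. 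In either case $(x_0, g^\delta_U(x_0))$ lies on the graph of $h^\delta$, hence in $H_\delta$, and $\mu_\delta = \dhat(x_0)$ by Theorem~\ref{th:asy1}.

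It remains to establish uniqueness in $[0, t_0]$. For $x \in [0, \delta(t_0))$, $g^\delta_U(x) < t_0$ misses both the graph of $h^\delta$ and every vertical segment of $H_\delta$, since these take values in $[t_0, 1]$. For $x \in [\delta(t_0), t_0] \setminus E$ with $\dhat(x) > \dhat(x+\dhat(t_0))$, one gets $g^\delta_U(x) > h^\delta(x)$ and $g^\delta_U(x)$ exceeds every vertical segment; in the opposite regime, a case analysis of possible plateaus of $\dhat$ shows the nondegenerate vertical segment of $H_\delta$ at $x$ (when present) begins strictly above $g^\delta_U(x)$. Finally, for $x \in E \setminus \{x_0\}$ the constant value of $h^\delta$ strictly exceeds $g^\delta_U(x)$ on the interior of $E$, and the jump of $h^\delta$ at the left endpoint $a$ corresponds to a degenerate vertical segment (because $h^\delta$ is right-continuous and jumps downward there). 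The main obstacle is exactly this last step: the plateau structure of $E$ demands careful bookkeeping of the discontinuities of $h^\delta$ to confirm that $(x, g^\delta_U(x)) \in H_\delta$ selects only $x_0 = \max E$ despite $\tau$ being constant on the entire interval $E$.
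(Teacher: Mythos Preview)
Your computational approach via the explicit set $E$ is appealing, but it contains a genuine gap in the existence step, not just in the uniqueness step you flag at the end.

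The problem is the unjustified assertion ``If $E$ is a singleton, $\dhat$ decreases strictly past $x_0+\dhat(t_0)$, giving $h^\delta(x_0)=g^\delta_U(x_0)$.'' This is false, and its failure can make your chosen $x_0=\max E$ miss $H_\delta$ altogether. Take $\dhat(x)=x$ on $[0,\tfrac14]$, $\dhat\equiv\tfrac14$ on $[\tfrac14,\tfrac34]$, and $\dhat(x)=1-x$ on $[\tfrac34,1]$. With your (unspecified) choice $t_0=\tfrac14$ one gets $E=\{\tfrac14\}$, hence $x_0=\tfrac14$ and $g^\delta_U(x_0)=\tfrac12$, but $\dhat$ is constant on $[\tfrac12,\tfrac34]$, $h^\delta(\tfrac14)=\tfrac34$, and $h^\delta$ is continuous at $\tfrac14$; thus $(\tfrac14,\tfrac12)\notin H_\delta$. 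The correct point is $x_0=\tfrac12$. Even if one fixes $t_0$ to be the \emph{largest} maximizer of $\dhat$, the equality $h^\delta(x_0)=g^\delta_U(x_0)$ can still fail: with $\dhat$ piecewise linear having plateaus at heights $0.2$ and $0.3$ (e.g.\ nodes at $0,0.2,0.3,0.4,0.5,0.6,0.8,1$ with values $0,0.2,0.2,0.3,0.3,0.2,0.2,0$) and $t_0=0.5$, one finds $E=\{0.3\}$, $g^\delta_U(0.3)=0.6$, but $h^\delta(0.3)=0.8$; here $(0.3,0.6)$ lies only on a \emph{vertical segment} of $H_\delta$, not on the graph of $h^\delta$. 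So both branches of your case split need reworking, and the ambiguity in $t_0$ must be resolved.

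The paper avoids all of this by arguing abstractly: with $t_0$ the largest maximizer, $h^\delta$ is decreasing on $[0,t_0]$ while $g^\delta_U$ is increasing, $h^\delta(0)\ge g^\delta_U(0)$ and $h^\delta(t_0)\le g^\delta_U(t_0)$, so the ``filled-in'' graph $H_\delta$ meets the graph of $g^\delta_U$ (using that $g^\delta_U$ cannot jump over $h^\delta$, as in Proposition~\ref{pr:gdinH}). Uniqueness is then obtained by ruling out a common interval of constancy: if $g^\delta_U$ and $h^\delta$ were both constant on $(x_1,x_2)$, then $\dhat$ (hence $f_1^\delta$) would be constant there, contradicting $f^\delta(t,g^\delta_U(t))=t$. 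This sidesteps the case analysis on plateaus entirely; your route could likely be repaired by pinning down $t_0$ and treating vertical segments systematically, but as written the existence claim does not go through.
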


\begin{proof}
Suppose $t_0 \in \II$ is the largest point at which $\dhat$ attains its global maximum, so that $h^\delta(t_0)=t_0\le g^\delta_U(t_0)$.
Since $h^\delta(0)=1 \ge g^\delta_U(0)$, we conclude as in the proof of Proposition~\ref{pr:gdinH} that there exists $x_0 \in [0,t_0]$ such that $(x_0,g^\delta_U(x_0)) \in H_\delta$.

Since the function $h^\delta$ is decreasing on $[0, t_0]$ and $g^\delta_U$ is increasing, there is only one such point $x_0$ with $(x_0,g^\delta_U(x_0)) \in H_\delta$, unless $h^\delta$ and $g^\delta_U$ are both constant on some interval $(x_1, x_2)$ with $x_1<x_2$. So suppose that this is the case. Since $h^\delta$ is constant, also $\dhat$ is constant on $(x_1, x_2)$. By definition of $g^\delta_U$ and continuity of $f^\delta$ we have 
$$f^\delta(t, g^\delta_U(t)) = f_1^\delta(t) + f_2^\delta(g^\delta_U(t)) = t$$ 
for every $t \in (x_1, x_2)$. Now, $f_1^\delta(t) = -\frac12 \dhat(t) +\frac12 \tv_0^t (\dhat)$ is constant on $(x_1, x_2)$, since $\dhat$ is, and also $f_2^\delta(g^\delta_U(t))$ is constant, since $g^\delta_U$ is constant here, which is a contradiction.

Since $h^\delta(t) \ge t_0$ for all $t \in \II$ and $\dhat$ is continuous and decreasing on $[t_0,1]$ we have
$$\dhat(\liminf_{t \to x_0} h^\delta(t))=\limsup_{t \to x_0} \dhat(h^\delta(t))=\limsup_{t \to x_0} \dhat(t)=\dhat(x_0)=\dhat(h^\delta(x_0)),$$
and consequently 
$\dhat$ is constant on $[\liminf_{t \to x_0} h^\delta(t),h^\delta(x_0)]$.
This interval contains $g^\delta_U(x_0)$ because $(x_0,g^\delta_U(x_0)) \in H_\delta$, hence,
\begin{equation}\label{eq:g(x_0)}
\dhat(g^\delta_U(x_0))=\dhat(h^\delta(x_0))=\dhat(x_0) \quad\text{and}\quad g^\delta_U(x_0) \ge t_0.
\end{equation}

Now let, $x_1 \in (t_0,1]$ be any point such that $(x_1,g^\delta_U(x_1)) \in H_\delta$, so that $t_0 \le g^\delta_U(x_1) \le h^\delta(x_1)$. Since $\dhat(h^\delta(x_1))=\dhat(x_1)$ and $\dhat$ is decreasing on $[t_0,1]$, it must be constant on $[x_1,h^\delta(x_1)]$.
But then $\dhat(g^\delta_U(x_1))=\dhat(x_1)$ because $x_1 \le g^\delta_U(x_1) \le h^\delta(x_1)$.
From this and from \eqref{eq:g(x_0)} we conclude that
$$\dhat(x_0)=\dhat(g^\delta_U(x_0)) \ge \dhat(g^\delta_U(x_1))=\dhat(x_1).$$
It now follows from Proposition~\ref{pr:gdinH} that $\mu_\delta=\dhat(x_0)$.
\end{proof}

Proposition~\ref{pr:simple} is illustrated in the following example.

\begin{example}
Let $\delta$ be a diagonal section defined by $\delta(x)=x^2$ for all $x\in \II$. Since $\delta$ is a simple diagonal and $\dhat$ is symmetric with respect to $x=\frac{1}{2}$, we have
$$h^\delta(x)=\begin{cases}
    1-x; & 0\le x\le \frac12,\\
    x; & \frac12<x\le 1,
\end{cases}$$
and $g^\delta_U$ is given in Example~\ref{ex:x^2}. Since $g^\delta_U(\frac38)=\frac58=h^\delta(\frac38)$ and $\dhat$ is increasing on $[0,\frac38]$, Proposition~\ref{pr:simple} implies
$\mu_\delta =\dhat(\frac38)= \frac{15}{64} \approx 0.2344$.
\end{example}

\section*{Acknowledgments}

The authors would like to thank the anonymous reviewers. Their suggestions helped us to greatly improve the presentation of the manuscript.

The authors acknowledge financial support from the ARIS (Slovenian Research and Innovation Agency, research core funding No. P1-0222).

\bibliographystyle{amsplain}
\bibliography{biblio}

\end{document}